\providecommand{\U}[1]{\protect\rule{.1in}{.1in}}
\theoremstyle{plain}
\newtheorem{corollary}{Corollary}
\newtheorem{definition}{Definition}
\newtheorem{example}{Example}
\newtheorem{lemma}{Lemma}
\newtheorem{proposition}{Proposition}
\newtheorem{remark}{Remark}
\numberwithin{equation}{section}
\begin{document}
\title[Riesz spaces with topologically full center]{Characterization of Riesz spaces with topologically full center}
\author{\c{S}afak Alpay}
\address{Mathematics Department, Middle East Technical University\\
06531 Ankara, Turkiye}
\email{safak@metu.edu.tr}
\author{Mehmet Orhon}
\address{Department of Mathematics and Statistics, University of New Hampshire\\
Durham, NH 03824, USA}
\email{mo@unh.edu}
\urladdr{}
\thanks{.}
\thanks{.}
\date{February 10, 2014}
\subjclass[2000]{Primary 47B38, 46B42; Secondary 47B60, 46H25}
\keywords{Riesz space, ideal center, orthomorphism, Arens extension}

\begin{abstract}
Let $E$ be a Riesz space and let $E^{\sim}$ denote its order dual. The
orthomorphisms $Orth(E)$ on $E,$ and the ideal center $Z(E)$ of $E,$ are
naturally embedded in $Orth(E^{\sim})$ and $Z(E^{\sim})$ respectively. We
construct two unital algebra and order continuous Riesz homomorphisms
\[
\gamma:((Orth(E))^{\sim})_{n}^{\sim}\rightarrow Orth(E^{\sim})\text{ }%
\]
and
\[
m:Z(E)^{\prime\prime}\rightarrow Z(E^{\sim})
\]
that extend the above mentioned natural inclusions respectively. Then, the
range of $\gamma$ is an order ideal in $Orth(E^{\sim})$ if and only if $m$ is
surjective. Furthermore, $m$ is surjective if and only if $E$ has a
topologically full center. (That is, the $\sigma(E,E^{\sim})$-closure of
$Z(E)x$ contains the order ideal generated by $x$ for each $x\in E_{+}.$) As a
consequence, $E$ has a topologically full center $Z(E)$ if and only if
$Z(E^{\sim})=\pi\cdot Z(E)^{\prime\prime}$ for some idempotent $\pi\in
Z(E)^{\prime\prime}.$

\end{abstract}
\maketitle

\section{Introduction}

Let $E$ be a Banach lattice and let $Z(E)$ be its (ideal ) center. In general
$Z(E)$ is a subalgebra and sublattice of $Z(E^{\prime})$, the center of the
Banach dual $E^{\prime}$ of $E.$ It is possible to extend this embedding to a
contractive algebra and lattice homomorphism of $Z(E)^{\prime\prime}$ into
$Z(E^{\prime}).$ It is natural to ask when the homomorphism would be onto
$Z(E^{\prime})$. It is clear that $Z(E)$ has to be large, since $Z(E^{\prime
})$ is always large. It turns out that the concept of largeness best suiting
the center $Z(E)$ in this problem is that $Z(E)$ should be topologically full
in the sense of Wickstead \cite{W1}. Namely, for each $x\in E_{+},$ the
closure of $Z(E)x$ is the closed ideal generated by $x.$ Then it is shown that
the homomorphism is onto $Z(E^{\prime})$ if and only if $Z(E)$ is
topologically full \cite[Corollary 2]{O1}.

Our purpose in this $\ $paper is to consider the corresponding problem for a
Riesz space $E$ with point separating order dual $E^{\sim}$. In the case of
Riesz spaces however, there are, in general, two different algebras that act
on $E$ which should be considered. Namely, $Orth(E),$ the algebra of the
orthomorphisms on $E,$ and its subalgebra and (order) ideal $Z(E),$ the center
of $E.$ In the case of Banach lattices $Orth(E)=Z(E),$ therefore this problem
does not arise. Recall that an orthomorphism on $E$ is an order bounded
operator on $E$ that preserves bands, and $Z(E)$ is the ideal generated by the
identity operator on $E$ in the Riesz space $Orth(E).$ In fact $Orth(E)$ is an
$f$-algebra. The ideal center $Z(E)$, on the other hand, is a normed
$AM$-lattice where the identity operator is the order unit of the
$AM$-lattice. Similar to the Banach lattice case, $Orth(E)$ is embedded in
$Orth(E^{\sim})$ as an $f$-subalgebra and sublattice. Also as before, $Z(E)$
is embedded in $Z(E^{\sim})$ as a subalgebra and sublattice. We construct two
unital algebra and order continuous lattice homomorphisms
\[
\gamma:(Orth(E)^{\sim})_{n}^{\sim}\rightarrow Orth(E^{\sim})
\]
and%
\[
m:Z(E)^{\prime\prime}\rightarrow Z(E^{\sim})
\]
that extend the two embeddings mentioned above respectively. Then the range of
$\gamma$ is an order ideal in $Orth(E^{\sim})$ if and only if $m$ is onto
$Z(E^{\sim})$ (Corollary \ref{c2}). Also, $m$ is onto $Z(E^{\sim})$ if and
only if $E$ has a topologically full center (Proposition \ref{p5}). That is
for each $x\in E_{+}$, the $\sigma(E,E^{\sim})$-closure of $Z(E)x$ in $E$
contains the ideal generated by $x.$ It follows that $E$ has a topologically
full center $Z(E)$ if and only if the ideal center of its order dual $E^{\sim
}$ is given as $Z(E^{\sim})=\pi\cdot Z(E)^{\prime\prime}$ for some idempotent
$\pi\in Z(E)^{\prime\prime}.$

We point out that the proofs of the above mentioned results differ from those
used in the Banach lattice case \cite{O1}. The method used in this paper owes
a lot to the work and results of Huijsmans and de Pagter \cite{HdP1} on the
bidual of an $f$-algebra.

In the Banach lattice case it is shown that if $Z(E)$ is topologically full
then it is maximal abelian \cite[Corollary 3]{O1}. Wickstead \cite{W2} showed
that the converse is not true. He constructed an interesting $AM$-lattice with
a center that is maximal abelian but is not topologically full. In the case of
Riesz spaces, even when $Z(E)$ is topologically full, it need not be maximal
abelian. In fact, when $Z(E)$ is topologically full, its commutant in the
order bounded operators is $Orth(E)$ (Corollary 4).

If $E$ is a Riesz space, by $E^{\sim}$ we will denote the Riesz space of order
bounded linear functionals on $E$. All Riesz spaces considered in this paper are assumed to have separating order duals. $E_{n}^{\sim}$ will
denote the order continuous linear functionals in $E^{\sim}$. We let
$L_{b}(E,F)$ denote the space of order bounded linear operators from the Riesz
space $E$ into the Riesz space $F.$ When $T:E\rightarrow F$ is an order
bounded operator between two Riesz spaces, the adjoint of $T$ carries
$F^{\sim}$ into $E^{\sim}$ and we will denote it by $T^{\prime}.$ The dual of
a normed space will be denoted by $E^{\prime}.$ In all terminology concerning
Riesz spaces we will adhere to the definitions in \cite{AB} and \cite{Z}.

Let us recall that for any associative algebra $A,$ a multiplication (called
the Arens multiplication) can be introduced in the second algebraic dual
$A^{\ast\ast}$ of $A$ \cite{Ar}. This is accomplished in three steps: given
$a,b\in A,$ $f\in A^{\ast}$ and $F,G\in A^{\ast\ast}$, one defines $f\cdot
a\in A^{\ast}$, $F\cdot f\in A^{\ast}$, and $F\cdot G\in A^{\ast\ast}$ by the
equations%
\begin{align*}
(f\cdot a)(b)  &  =f(ab)\\
(F\cdot f)(a)  &  =F(f\cdot a)\\
(F\cdot G)(f)  &  =F(G\cdot f)
\end{align*}
For any Archimedean $f$-algebra $A,$ the space $(A^{\sim})_{n}^{\sim}$ is an
Archimedean $f$-algebra with respect to the Arens multiplication \cite{HdP1}.

We denote for any $F\in(A^{\sim})_{n}^{\sim}$ the mapping $f\rightarrow F\cdot
f$ by $\nu_{F}.$ The map $\nu_{F}$ is an orthomorphism on $A^{\sim}%
$\cite{HdP1}. The mapping $\nu:(A^{\sim})_{n}^{\sim}\rightarrow Orth(A^{\sim
}),$ defined by $\nu(F)=\nu_{F}$ is an algebra and Riesz homomorphism for any
Archimedean $f$-algebra $A.$ Moreover $\nu$ is onto $Orth(A^{\sim})$ if and
only if $(A^{\sim})_{n}^{\sim}$ has a unit element. In that case, $\upsilon$
is injective by Theorem 5.2 in \cite{HdP1}. The main purpose of this $\ $paper
is to extend the latter result to an arbitrary Riesz space.

\section{The Arens Homomorphism}

Let $E$ be a Riesz space and consider the bilinear map%
\begin{equation}
\ Orth(E)\,\times E\rightarrow E \tag{1}\label{eq1}%
\end{equation}
defined by $(\pi,x)\rightarrow\pi(x)$ for each $\pi\in\ Orth(E)$%
\thinspace\ and $x\in E.$ Related to (\ref{eq1}), we define the following
bilinear maps:%
\begin{equation}
E\times E^{\sim}\rightarrow(\ Orth(E)\,)^{\sim}::(x,f)\rightarrow\psi
_{x,f}:\psi_{x,f}(\pi)=f(\pi x) \tag{2}\label{eq2}%
\end{equation}%
\begin{equation}
E^{\sim}\times(\ Orth(E)\,)^{\sim\sim}\rightarrow E^{\sim}::(f,F)\rightarrow
F\bullet f:F\bullet f(x)=F(\psi_{x,f}) \tag{3}\label{eq3}%
\end{equation}
where $x\in E,$ $f\in E^{\sim},$ $\pi\in\ Orth(E)$\thinspace\ and
$F\in\ Orth(E)$\thinspace$)^{\sim\sim}.$ We call the map defined in
(\ref{eq3}) the\emph{ Arens extension} of the map in (\ref{eq1}).

For an Archimedean unital $f$-algebra $A$, we have $A^{\sim\sim}=(A^{\sim
})_{n}^{\sim}$ by Corollary 3.4 in \cite{HdP1}. Since $\ Orth(E)$%
\thinspace\ is a unital $f$-algebra with point separating order dual, it is
Archimedean. This enables us to use the identification $(\ Orth(E)$%
\thinspace$)^{\sim\sim}=((\ Orth(E)$\thinspace$)^{\sim})_{n}^{\sim}$
throughout this $\ $paper. It is straightforward to check that the Arens
product on the $f$-algebra $((\ Orth(E)$\thinspace$)^{\sim})_{n}^{\sim}$ is
compatible with the Arens extension defined in (\ref{eq3}), that is, $E^{\sim
}$ is a unital module over $((\ Orth(E)$\thinspace$)^{\sim})_{n}^{\sim}.$

We use (\ref{eq3}) to define a linear operator%
\[
\gamma:((\ Orth(E)\,)^{\sim})_{n}^{\sim}\rightarrow L_{b}(E^{\sim})\text{ by
}\gamma(F)(f)=F\bullet f
\]
for each $F\in((\ Orth(E)$\thinspace$)^{\sim})_{n}^{\sim}$ and $f\in E^{\sim
}.$ We will call $\gamma$ the Arens homomorphism of the order bidual of
$\ Orth(E)$\thinspace$.$

\begin{proposition}
\label{p1} $\gamma$ is a unital algebra and order continuous Riesz
homomorphism such that $\gamma\left(  ((\ Orth(E)\,)^{\sim})_{n}^{\sim
}\right)  \subset Orth(E^{\sim}).$
\end{proposition}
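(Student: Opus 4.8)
The plan is to dispatch the elementary properties first and to isolate the containment $\gamma(F)\in Orth(E^{\sim})$ as the real content. Throughout write $B=((Orth(E))^{\sim})_{n}^{\sim}$ and let $e=\hat I$ be the canonical image of the identity $I\in Orth(E)$, which is the algebra unit of $B$. \emph{Unitality} is immediate: $e\bullet f(x)=\hat I(\psi_{x,f})=\psi_{x,f}(I)=f(x)$, so $\gamma(e)=\mathrm{id}_{E^{\sim}}$. \emph{Positivity} is equally direct, since for $x,f\ge 0$ and $\pi\in Orth(E)_{+}$ one has $\psi_{x,f}(\pi)=f(\pi x)\ge 0$, whence $\psi_{x,f}\ge 0$ and $\gamma(F)f=F\bullet f\ge 0$ whenever $F\ge 0$. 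That $\gamma$ is \emph{multiplicative} is exactly the module identity $(F\cdot G)\bullet f=F\bullet(G\bullet f)$ recorded just before the statement, so $\gamma$ is already a unital algebra homomorphism into $L_{b}(E^{\sim})$.

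Next I would prove \emph{order continuity}, using that $E^{\sim}$ and hence $L_{b}(E^{\sim})$ are Dedekind complete. Suppose $F_{\alpha}\downarrow 0$ in $B$. For fixed $f\in E^{\sim}_{+}$ and $x\in E_{+}$ the numbers $F_{\alpha}(\psi_{x,f})$ decrease to $0$ (order continuity of the $F_{\alpha}$ together with $\psi_{x,f}\ge 0$); since the infimum of a downward directed net in $E^{\sim}$ is computed pointwise on $E_{+}$, this gives $\inf_{\alpha}(F_{\alpha}\bullet f)=0$ in $E^{\sim}$. If $T=\inf_{\alpha}\gamma(F_{\alpha})\ge 0$ in $L_{b}(E^{\sim})$, then $Tf\le F_{\alpha}\bullet f$ for every $\alpha$ forces $Tf\le\inf_{\alpha}(F_{\alpha}\bullet f)=0$ for all $f\ge 0$, so $T=0$ and $\gamma(F_{\alpha})\downarrow 0$.

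The decisive step is the containment $\gamma(B)\subseteq Orth(E^{\sim})$. I would first note $\gamma(\hat\pi)=\pi^{\prime}$ for $\pi\in Orth(E)$, since $\hat\pi\bullet f(x)=\psi_{x,f}(\pi)=f(\pi x)=(\pi^{\prime}f)(x)$; these adjoints are orthomorphisms of $E^{\sim}$ by the embedding recalled in the Introduction. One cannot pass to a general $F$ by order density of $Orth(E)$ in $B$, as that density \emph{fails} in general (already for $Orth(E)=C[0,1]$). Instead I would exploit the spectral structure of the Dedekind complete $f$-algebra $B$. If $P\in B$ is a component of $e$, then $\gamma(P)$ is a positive idempotent with $0\le\gamma(P)\le\gamma(e)=\mathrm{id}_{E^{\sim}}$; an elementary argument shows that any idempotent $Q$ with $0\le Q\le I$ on a Riesz space is a band projection (for $x\ge 0$, $w:=Qx\wedge(I-Q)x$ satisfies $Qw=0$ and $(I-Q)w=0$, hence $w=0$), so $\gamma(P)\in Z(E^{\sim})$. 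Hence every $e$-step function maps under $\gamma$ into $Z(E^{\sim})$. By the Freudenthal spectral theorem (valid since $e$ is a weak order unit of the semiprime $f$-algebra $B$), each $F\ge 0$ is the order limit of an increasing sequence of $e$-step functions below it; order continuity of $\gamma$ then presents $\gamma(F)$ as a supremum of elements of $Orth(E^{\sim})$. Since $E^{\sim}$ is Dedekind complete, $Orth(E^{\sim})$ is a band in $L_{b}(E^{\sim})$, so this supremum lies in $Orth(E^{\sim})$; splitting $F=F^{+}-F^{-}$ finishes the containment.

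Finally, the \emph{Riesz homomorphism} property follows cheaply once the range is under control. As $\gamma$ is positive it suffices to check it preserves disjointness. If $F\wedge G=0$ with $F,G\ge 0$, then $F\cdot G=0$ in the $f$-algebra $B$, so $\gamma(F)\circ\gamma(G)=\gamma(F\cdot G)=0$ in $Orth(E^{\sim})$; since $Orth(E^{\sim})$ is a semiprime $f$-algebra, $\big(\gamma(F)\wedge\gamma(G)\big)^{2}\le\gamma(F)\gamma(G)=0$ forces $\gamma(F)\wedge\gamma(G)=0$, and $\gamma$ is a lattice homomorphism. I expect the third paragraph to be the main obstacle: the tempting order-density argument is unavailable, so the proof must route through the band projections of $B$, the characterization of $0\le Q\le I$ idempotents, Freudenthal's theorem, and the fact that $Orth(E^{\sim})$ is a band in $L_{b}(E^{\sim})$ — it is the interaction of these ingredients, rather than any single one, that carries the containment.
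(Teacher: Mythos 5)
Your proof is correct in substance, but for the decisive containment it takes a much heavier route than the paper, and the reason you give for doing so rests on a misreading of what density is actually available. You are right that $Orth(E)$ itself need not be order dense in $B=((Orth(E))^{\sim})_{n}^{\sim}$ (your $C[0,1]$ example is accurate), but the paper does not use that: it uses that the \emph{order ideal generated by} $Orth(E)$ is order dense in $B$, which is true because the image $e$ of the identity is the multiplicative unit of the $f$-algebra $B$, hence a weak order unit, so that $F=\sup_{n}F\wedge ne$ for every $F\geq 0$. With that in hand the paper argues: if $|F|\leq\pi$ for some $\pi\in Orth(E)_{+}$, then $|\gamma(F)|\leq\gamma(\pi)=\pi^{\prime}\in Orth(E^{\sim})$, so the ideal generated by $Orth(E)$ maps into the ideal $Orth(E^{\sim})$; order continuity of $\gamma$ plus the fact that $Orth(E^{\sim})$ is a band in $L_{b}(E^{\sim})$ (you invoke the same band fact) then finishes. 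Your spectral detour is correct but collapses to this: you never need idempotency, the lemma that an idempotent $Q$ with $0\leq Q\leq I$ is a band projection, or Freudenthal, because $0\leq P\leq e$ alone already yields $0\leq\gamma(P)\leq\mathrm{id}_{E^{\sim}}$, i.e.\ $\gamma(P)\in Z(E^{\sim})$ by the very definition of the ideal center --- and the same one-line estimate handles every element of the ideal of $e$ at once. Note also that your Freudenthal step tacitly uses exactly the kind of order density (of step functions, hence of the ideal of $e$) that you declared unavailable. Two smaller points. In the order-continuity paragraph, the parenthetical justification ``order continuity of the $F_{\alpha}$'' is not the right reason that $F_{\alpha}(\psi_{x,f})\downarrow 0$; what you need is that $B$ is an ideal (indeed a band) in the Dedekind complete space $Orth(E)^{\sim\sim}$, so $F_{\alpha}\downarrow 0$ in $B$ implies $F_{\alpha}\downarrow 0$ there, where infima of decreasing nets of positive functionals are computed pointwise on positive elements --- the same fact you correctly invoke a line later for $E^{\sim}$. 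Finally, for the Riesz homomorphism property the paper simply cites Corollary 5.5 of Huijsmans--de Pagter (an algebra homomorphism in this setting is automatically a Riesz homomorphism), whereas your direct argument via $\bigl(\gamma(F)\wedge\gamma(G)\bigr)^{2}\leq\gamma(F)\gamma(G)=0$ and semiprimeness of the unital $f$-algebra $Orth(E^{\sim})$ is a fine self-contained substitute.
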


\begin{proof}
It follows from the definition of $\gamma$ that \ $\gamma$ is a positive order
continuous unital algebra homomorphism. Also it is easily checked that
$\gamma(\pi)=\pi^{\prime}$ for each $\pi\in\ Orth(E)$\thinspace$.$ Let
$F\in((\ Orth(E)$\thinspace$)^{\sim})_{n}^{\sim}$ such that $|F|\leq\pi$ for
some $\pi\in\ Orth(E)$\thinspace$.$ Then $|\gamma(F)|\leq\gamma(|F|)\leq
\gamma(\pi)=\pi^{\prime}$ by the positivity of $\gamma.$ Since $\pi^{\prime
}\in Orth(E^{\sim}),$ we see that $\gamma(F)\in Orth(E^{\sim}).$ Then, since
$\gamma$ is order continuous and the ideal generated by $\ Orth(E)$%
\thinspace\ is strongly order dense in $((\ Orth(E)$\thinspace$)^{\sim})_{n}^{\sim}%
$,\ the range of $\gamma$ is contained in $Orth(E^{\sim}).$ That $\gamma$ is a
Riesz homomorphism follows from the fact that it is an algebra homomorphism by
Corollary 5.5 in \cite{HdP2}.
\end{proof}

If $A$ is an $f$-algebra then the range of the homomorphism $\nu:(A^{\sim
})_{n}^{\sim}\rightarrow Orth(A^{\sim})$ is contained in the range of the
Arens homomorphism $\gamma.$

\begin{proposition}
\label{p2} Let $A$ be an $f$-algebra with point separating order dual. The
range of the homomorphism $\nu:(A^{\sim})_{n}^{\sim}\rightarrow Orth(A^{\sim
})$ is contained in the range of the Arens homomorphism $\gamma.$
\end{proposition}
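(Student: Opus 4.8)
The plan is to realize $\nu$ as a composition of $\gamma$ with the double adjoint of the regular representation of $A$ on itself, so that the range of $\nu$ sits inside the range of $\gamma$ automatically. First I would introduce the map $\lambda:A\rightarrow Orth(A)$ sending $a$ to the multiplication operator $M_{a}:x\mapsto ax$. Since $A$ is an Archimedean $f$-algebra, each $M_{a}$ is indeed an orthomorphism, and $\lambda$ is a positive (hence order bounded) algebra and Riesz homomorphism; commutativity of $A$ together with associativity gives $M_{ab}=M_{a}M_{b}$. Being order bounded, $\lambda$ has a double adjoint $\lambda'':A^{\sim\sim}\rightarrow (Orth(A))^{\sim\sim}$.

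Next I would observe that the codomain of $\lambda''$ is exactly the domain of $\gamma$. Indeed, $Orth(A)$ is a unital $f$-algebra, so by the identification already in force we have $(Orth(A))^{\sim\sim}=((Orth(A))^{\sim})_{n}^{\sim}$. Restricting $\lambda''$ to the subspace $(A^{\sim})_{n}^{\sim}\subseteq A^{\sim\sim}$ yields a map $(A^{\sim})_{n}^{\sim}\rightarrow ((Orth(A))^{\sim})_{n}^{\sim}$ that we may feed into $\gamma$; no separate order continuity argument is needed here, since the whole codomain of $\lambda''$ already lies in the domain of $\gamma$.

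The heart of the argument is the identity $\gamma\circ\lambda''=\nu$ on $(A^{\sim})_{n}^{\sim}$, which I would verify by unwinding the definitions. For $F\in (A^{\sim})_{n}^{\sim}$, $f\in A^{\sim}$ and $x\in A$ one computes $\gamma(\lambda''F)(f)(x)=(\lambda''F)(\psi_{x,f})=F(\lambda'\psi_{x,f})$, where $(\lambda'\psi_{x,f})(a)=\psi_{x,f}(M_{a})=f(ax)$. On the other side, $\nu(F)(f)(x)=(F\cdot f)(x)=F(f\cdot x)$ with $(f\cdot x)(a)=f(xa)$. Thus the two agree precisely because $f(ax)=f(xa)$, i.e.\ because the Archimedean $f$-algebra $A$ is commutative; this commutativity step is the one place where the special structure of $A$ is essential, and is where I expect the only genuine subtlety to lie (the rest being careful bookkeeping through the nested Arens-type dualities). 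Granting the identity, the range of $\nu$ equals $\gamma\bigl(\lambda''((A^{\sim})_{n}^{\sim})\bigr)$, which is contained in $\gamma\bigl(((Orth(A))^{\sim})_{n}^{\sim}\bigr)$, the range of $\gamma$, completing the proof.
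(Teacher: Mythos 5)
Your proposal is correct and is essentially the paper's own proof: the paper's canonical embedding $P:A\rightarrow Orth(A)$, $P(a)(b)=ab$, is your $\lambda$, its functionals $\widehat{\mu}$ and $\widehat{F}$ are exactly $\lambda'(\mu)$ and $\lambda''(F)$, and the paper's verification $\nu_{F}=\gamma(\widehat{F})$ via $\widehat{\psi}_{a,f}=f\cdot a$ is precisely your identity $\gamma\circ\lambda''=\nu$, with commutativity of the Archimedean $f$-algebra entering at the same spot. The only differences are cosmetic (you package the paper's hand-checked positivity of $\widehat{\mu}$ and $\widehat{F}$ as order boundedness of adjoints, and note that $M_{ab}=M_{a}M_{b}$ in fact needs only associativity, not commutativity).
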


\begin{proof}
Let $P:A\rightarrow Orth(A)$ be the canonical embedding of $A$ into
$\ Orth(A)$\thinspace\ (i.e., $P(a)(b)=ab$ for all $a,b\in A$). It is well
known that $P(A)$ is a sublattice and an algebra ideal in $Orth(A).$ For each
$\mu\in(\ Orth(A)$\thinspace$)^{\sim}$, define $\widehat{\mu}\in A^{\ast}$ by
$\widehat{\mu}(a)=\mu(P(a)).$ Since positivity is preserved, it is clear that
$\widehat{\mu}\in A^{\sim}$ for each $\mu\in(\ Orth(A)$\thinspace$)^{\sim}.$
Given $a\in A,$ $f\in A^{\sim}$, we have $\psi_{a,f}\in(\ Orth(A)$%
\thinspace$)^{\sim}.$ Then%
\[
\widehat{\psi}_{a,f}(b)=\psi_{a,f}(P(b))=f(P(b)(a))=f(ab)=(f\cdot a)(b)
\]
for each $b\in A.$ That is $\widehat{\psi}_{a,f}=f\cdot a$ for all $a\in A,$
$f\in A^{\sim}.$ Now let $F\in(A^{\sim})_{n}^{\sim}.$ Define $\widehat{F}%
\in((\ Orth(A)$\thinspace$)^{\sim})^{\ast}$ by $\widehat{F}(\mu)=F(\widehat
{\mu})$ for each $\mu\in(\ Orth(A)$\thinspace$)^{\sim}.$ Since $0\leq\mu$
implies $0\leq\widehat{\mu},$ we have $0\leq\widehat{F}$ whenever $0\leq F.$
That is $\widehat{F}\in((\ Orth(A)$\thinspace$)^{\sim})^{\sim}=((\ Orth(A)$%
\thinspace$)^{\sim})_{n}^{\sim}.$ For each $F\in(A^{\sim})_{n}^{\sim},$ $f\in
A^{\sim}$ and $a\in A,$ we have%
\begin{align*}
\nu_{F}(f)(a)  &  =F\cdot f(a)=F(f\cdot a)=F(\widehat{\psi}_{a,f})=\widehat
{F}(\psi_{a,f})\\
&  =\widehat{F}\bullet f(a)=\gamma(\widehat{F})(f)(a).
\end{align*}
Hence $\nu_{F}=\gamma(\widehat{F}).$
\end{proof}

Let us check the behavior of $\gamma$ in some specific examples.

\begin{example}
\label{ex1} Let $\omega$ denote all sequences and let $A=l^{1}$ the
$f$-subalgebra of $\omega$ consisting of absolutely summable sequences. Then
$A^{\sim}=l^{\infty}$ and $\ Orth(A)$\thinspace$=Orth(A^{\sim})=l^{\infty}.$
Since $(A^{\sim})_{n}^{\sim}=l^{1},$ $\nu$ is the inclusion map $l^{1}%
\rightarrow l^{\infty}$ so that $\nu$ is one-to-one and not onto. On the other
hand $((\ Orth(A)$\thinspace$)^{\sim})_{n}^{\sim}=(l^{\infty})^{\prime\prime}$
and $\gamma$ is the band projection of $(l^{\infty})^{\prime\prime}$ onto
$l^{\infty}.$ Thus $\gamma$ is onto and not one-to-one.
\end{example}

\begin{example}
\label{ex2} Let $A=c_{0}$ be the $f$-subalgebra of $\omega$ consisting of the
sequences convergent to zero. Then $A^{\sim}=l^{1}$ and $\ Orth(A)$%
\thinspace$=Orth(A^{\sim})=l^{\infty}.$ Since $(A^{\sim})_{n}^{\sim}%
=l^{\infty},$ $\nu$ is the identity map on $l^{\infty}$ so that $\nu$ is
one-to-one and onto. On the other hand $((\ Orth(A)$\thinspace$)^{\sim}%
)_{n}^{\sim}=(l^{\infty})^{\prime\prime}$ and $\gamma$ is the band projection
of $(l^{\infty})^{\prime\prime}$ onto $l^{\infty}.$ Thus $\gamma$ is onto and
not one-to-one.
\end{example}

\begin{example}
\label{ex3} Consider $C[0,1]$ with the product $\ast$ defined by $a\ast b=iab$
with $i(x)=x$ for all $x\in\lbrack0,1].$ Then $A=(C[0,1],\ast)$ is an
Archimedean $f$-algebra. As shown in \cite{HdP1}, $(A^{\sim})_{n}^{\sim}$ is
not semi-prime. Therefore $\upsilon$ is not one-to-one and not onto
\cite{HdP1}. On the other hand $\ Orth(A)$\thinspace$=Z(C[0,1])=C[0,1]$ and
$Orth(A^{\sim})=Z(C[0,1]^{\prime})=C[0,1]^{\prime\prime}.$ Since
$(C[0,1]^{\sim})_{n}^{\sim}=C[0,1]^{\prime\prime},$ $\gamma$ is the identity
map on $C[0,1]^{\prime\prime}.$ Therefore $\gamma$ is one-to-one and onto.
\end{example}

\section{Riesz spaces with topologically full center}

We start with the following definition that is due to Wickstead \cite{W1} in
the case of Banach lattices.

\begin{definition}
\label{df1} Suppose $E$ is a Riesz space. Then $E$ is said to have a
topologically full center if for each $x\in E_{+}$ the $\sigma(E,E^{\sim}%
)$-closure of $Z(E)x$ contains the ideal generated by $x.$
\end{definition}

Banach lattices with topologically full center were initiated in \cite{W1}.
The class of Riesz spaces and the class Banach lattices with topologically
full center are quite large. For example, in a $\sigma$-Dedekind complete
Riesz space $E$ each positive element generates a projection band. Therefore
for each $x\in E_{+}$, $Z(E)x$ is an ideal and $Z(E)$ is topologically full.
Also Banach lattices with a quasi-interior point or with a topological
orthogonal system have topologically full center \cite{W2}. However not all
Riesz spaces have topologically full center.

\begin{example}
\label{ex4} (Zaanen \cite[p.664]{Z}) Let $E$ be the Riesz space of piecewise
affine continuous functions on $[0,1]$. Clearly the ideal generated by the
constant $1$ function equals $E.$ But, as shown by Zaanen, $Z(E)$ is trivial,
that is, it consists of the scalar multiples of the identity. Therefore $E$
does not have a topologically full center.
\end{example}

The first example of an AM-space that has trivial center was given in
\cite{G}. A thorough study of Banach lattices with trivial center was
undertaken in \cite{W3}. We refer the reader to \cite{W3} for further examples
of Banach lattices with trivial center as well as a careful treatment of the
following example of Goullet de Rugy mentioned above.

\begin{example}
\label{ex5}(\cite{G}) Let $K$ be a compact Hausdorff space with a point $p\in
K$ such that $\{p\}$ is not a $G_{\delta}$-set in $K$ (e.g., \cite[Example 4,
p.140]{S}). Let $C_{0}(K)$ denote the elements of $C(K)$ that vanish at $p.$
Let $H$ denote the positive unit ball of $C_{0}(K)^{\prime}$ with the relative
$\sigma(C_{0}(K)^{\prime},C_{0}(K))$-topology. Let $E=\{f\in C(H):f(r\mu
)=rf(\mu)$ for all $r\in\lbrack0,1]$ and for each $\mu\in H$ with $\Vert
\mu\Vert=1\}$. Then $E$ is an AM-space (without order unit). As a sublattice
of $l^{\infty}(H\smallsetminus\{0\}),$ one has $E^{d}=\{0\}.$ Therefore $Z(E)$
is embedded in $l^{\infty}(H\smallsetminus\{0\})$ \cite{W4}. Then one may
compute that $Z(E)$ consists of continuous bounded functions on
$H\smallsetminus\{0\}$ that are constant on the rays (i.e., $g(r\mu)=g(\mu)$
for all $r\in(0,1],$ for each $\mu\in H$ with $\left\Vert \mu\right\Vert =1$).
It follows by an argument in \cite[p.371]{G} that if there is a non-constant
function in $Z(E)$ then $\{0\}$ is a $G_{\delta}$-set in $H$. That in turn
implies that $\{p\}$ would be a $G_{\delta}$-set in $K.$ Therefore $Z(E)$ is
trivial and $E$ does not have a topologically full center.
\end{example}

$Z(E)$ is an Archimedean unital $f$-algebra with order unit. The order unit
norm induced on $Z(E)$ is an algebra and lattice norm. $\widehat{Z(E)}$, the
norm completion of $Z(E),$ is an AM-space and a partially ordered Banach
algebra where the order unit and the algebra unit coincide. Therefore by the
Stone algebra theorem $\widehat{Z(E)}\cong C(K)$ (isometric algebra and
lattice homomorphism) for some compact Hausdorff space $K.$ (Here $C(K)$
denotes the real valued continuous functions on $K.$) Then $Z(E)^{\prime
}=\widehat{Z(E)}^{\prime}=\widehat{Z(E)}^{\sim}=Z(E)^{\sim}$ and
$Z(E)^{\sim\sim}=Z(E)^{\prime\prime}.$ $Z(E)^{\prime\prime}$ is an AM-space
and with the Arens product, it is a partially ordered Banach algebra (an
Archimedean $f$-algebra with unit) where the order unit and the algebra unit
coincide. Therefore $Z(E)^{\prime\prime}\cong C(S)$ for some hyperstonian
space $S.$ That is the Arens product on $Z(E)^{\prime\prime}$ coincides with
the pointwise product on $C(S)$ \cite{Ar}.

Given the bilinear map
\begin{equation}
Z(E)\times E\rightarrow E \tag{4}\label{eq4}%
\end{equation}
defined by $(T,x)\rightarrow Tx$ for each $T\in Z(E)$ and $x\in E,$ we define
the following bilinear maps:%
\begin{equation}
E\times E^{\sim}\rightarrow Z(E)^{\prime}::(x,f)\rightarrow\mu_{x.f}%
=\psi_{x,f}|_{Z(E)} \tag{5}\label{eq5}%
\end{equation}%
\begin{equation}
E^{\sim}\times Z(E)^{\prime\prime}\rightarrow E^{\sim}::(f,F)\rightarrow
F\circ f:F\circ f(x)=F(\mu_{x,f}) \tag{6}\label{eq6}%
\end{equation}
where $x\in E,$ $f\in E^{\sim}$ and $F\in Z(E)^{\prime\prime}.$ The Arens
product on $Z(E)^{\prime\prime}$ is compatible with the bilinear map defined
in (\ref{eq6}). That is, $E^{\sim}$ is a unital module over $Z(E)^{\prime
\prime}.$ (\ref{eq6}) allows us to define a linear operator $m:Z(E)^{\prime
\prime}\rightarrow L_{b}(E^{\sim})$ where $m(F)(f)=F\circ f$ for all $f\in
E^{\sim}$ and $F\in Z(E)^{\prime\prime}.$ It is easily checked that
$m(T)=\gamma(T)$ whenever $T\in Z(E).$

We have the following analogue of Proposition \ref{p1} for the map $m$.

\begin{proposition}
\label{p3} $m$ is a unital algebra and order continuous lattice homomorphism
such that $m\left(  Z(E)^{\prime\prime}\right)  \subset Z(E^{\sim}).$
\end{proposition}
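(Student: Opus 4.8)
The plan is to mirror the proof of Proposition~\ref{p1}, exploiting the fact that, in contrast to $((Orth(E))^{\sim})_{n}^{\sim}$, the $f$-algebra $Z(E)^{\prime\prime}\cong C(S)$ carries an order unit, namely the image $I$ of the identity operator under the embedding $Z(E)\hookrightarrow Z(E)^{\prime\prime}$. First I would read off from the construction that $m$ is a positive unital algebra homomorphism. The unital-module identities already recorded for the action $(F,f)\mapsto F\circ f$ give $m(F\cdot G)=m(F)\,m(G)$ and $m(I)=I_{E^{\sim}}$ (the identity operator on $E^{\sim}$), since $m(F\cdot G)(f)=(F\cdot G)\circ f=F\circ(G\circ f)=m(F)\bigl(m(G)(f)\bigr)$. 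For positivity, note that when $x,f\geq 0$ the functional $\mu_{x,f}=\psi_{x,f}|_{Z(E)}$ is positive on $Z(E)$, because $\mu_{x,f}(T)=f(Tx)\geq 0$ for $0\leq T\in Z(E)$; hence $F\geq 0$ forces $m(F)(f)(x)=F(\mu_{x,f})\geq 0$, so $m\geq 0$.

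Next I would locate the range inside $Z(E^{\sim})$, and here the order unit shortens the argument considerably relative to Proposition~\ref{p1}. Recall $m(T)=\gamma(T)=T^{\prime}$ for $T\in Z(E)$, so $m$ extends the embedding $Z(E)\hookrightarrow Z(E^{\sim})$, and $m(I)=I_{E^{\sim}}$. Given an arbitrary $F\in Z(E)^{\prime\prime}$, the order unit property yields $|F|\leq\lambda I$ for some $\lambda\geq 0$; positivity of $m$ then gives $|m(F)|\leq m(|F|)\leq\lambda\, m(I)=\lambda\, I_{E^{\sim}}$. Since $Z(E^{\sim})$ is by definition the order ideal generated by $I_{E^{\sim}}$ in $Orth(E^{\sim})$, this places $m(F)$ in $Z(E^{\sim})$. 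Thus, unlike the $\gamma$ case, no appeal to strong order density is needed to pin the range down.

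It remains to verify that $m$ is order continuous and a lattice homomorphism. For the latter I would invoke exactly the tool used for $\gamma$: $m$ is an algebra homomorphism between the Archimedean $f$-algebras $Z(E)^{\prime\prime}$ and $Z(E^{\sim})$, hence a Riesz homomorphism by Corollary~5.5 in \cite{HdP2}. For order continuity I would test on a net $F_{\alpha}\downarrow 0$ in $Z(E)^{\prime\prime}$ and, since $m$ is positive, reduce $m(F_{\alpha})\downarrow 0$ in $L_{b}(E^{\sim})$ to showing $F_{\alpha}(\mu_{x,f})\downarrow 0$ for each fixed pair $x,f\geq 0$. I expect this last point to be the step demanding the most care: it amounts to the order continuity of the evaluation $F\mapsto F(\mu_{x,f})$ on $Z(E)^{\prime\prime}$, which is precisely the assertion that the canonical image of $Z(E)^{\prime}$ lies in the order continuous dual of $Z(E)^{\prime\prime}=(Z(E)^{\prime})_{n}^{\sim}$. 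This is the standard property of the canonical embedding of a Riesz space into its order continuous bidual, and it is the analogue of the order continuity of $\gamma$ that Proposition~\ref{p1} takes for granted.
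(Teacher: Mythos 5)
Your proof is correct, and its skeleton matches the paper's (positivity and the unital algebra identities read off the definition of $m$; the lattice homomorphism property via Corollary 5.5 of \cite{HdP2}, exactly as in Proposition \ref{p1}), but you handle the one substantive step, $m(Z(E)^{\prime\prime})\subset Z(E^{\sim})$, by a genuinely different and more elementary route. The paper chooses, for $F\in Z(E)^{\prime\prime}$, a net $\{T_{\alpha}\}$ in $Z(E)$ with $\Vert T_{\alpha}\Vert\leq\Vert F\Vert$ converging to $F$ in the $\sigma(Z(E)^{\prime\prime},Z(E)^{\prime})$-topology (Goldstine), and passes the bound $-\Vert F\Vert f(x)\leq f(T_{\alpha}x)\leq\Vert F\Vert f(x)$ through the limit for $x\in E_{+}$, $f\in E_{+}^{\sim}$. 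You instead exploit the order unit of $Z(E)^{\prime\prime}\cong C(S)$: from $|F|\leq\Vert F\Vert\cdot 1$, positivity and unitality of $m$ give $|m(F)|\leq m(|F|)\leq\Vert F\Vert\, m(1)=\Vert F\Vert\, I_{E^{\sim}}$ with no approximating net. Both arguments land on the identical bound; yours trades the weak-star compactness machinery for the AM/order-unit structure of $Z(E)^{\prime\prime}$, which is precisely the feature absent in the $\gamma$ case (whence the strong-order-density argument in Proposition \ref{p1}), a contrast you correctly identify. You also spell out the order continuity that the paper dismisses as immediate: your reduction to $F_{\alpha}(\mu_{x,f})\downarrow 0$ is sound, resting on the standard facts that infima of downward directed nets of positive operators in $L_{b}(E^{\sim})$, and of positive functionals in an order dual, are computed pointwise on the positive cone; equivalently, that $Z(E)^{\prime}$ embeds canonically into the order continuous dual of $Z(E)^{\prime\prime}=(Z(E)^{\prime})_{n}^{\sim}$. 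One small point worth making explicit: the bound $|m(F)|\leq\lambda I_{E^{\sim}}$ is established a priori only in $L_{b}(E^{\sim})$, but any order bounded operator dominated by a multiple of the identity is automatically band preserving, so membership in $Orth(E^{\sim})$ comes for free and the ideal generated by $I_{E^{\sim}}$ in $L_{b}(E^{\sim})$ is indeed $Z(E^{\sim})$; this is standard and does not affect the validity of your argument.
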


\begin{proof}
That $m$ is a positive order continuous algebra homomorphism is immediate from
the definition of $m$. That it is a lattice homomorphism follows as in
Proposition \ref{p1}.

For each $F\in Z(E)^{\prime\prime},$ there is a net $\{T_{\alpha}\}$ in $Z(E)$
such that $\parallel T_{\alpha}\parallel\leq\parallel F\parallel$ and
$T_{\alpha}\rightarrow F$ in $\sigma(Z(E)^{\prime\prime},Z(E)^{\prime}%
)$-topology. Let $f\in E_{+}^{\sim}$ and $x\in E_{+}$. Then
\[
-||F||f(x)\leq F\circ f(x)=\underset{\alpha}{\lim}f(T_{\alpha}x)\leq
||F||f(x).
\]
So $F\in Z(E^{\sim}).$
\end{proof}

We will call the map $m:Z(E)^{\prime\prime}\rightarrow Z(E^{\sim})$ the Arens
homomorphism of the bidual of $Z(E)$ (into $Z(E^{\sim})$).

\begin{proposition}
\label{p4} Let $A$ be a unital $f$-algebra with point separating order dual.
Then the Arens homomorphism of the bidual of $Z(A)$ is onto $Z(A^{\sim}).$
\end{proposition}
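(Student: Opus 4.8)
The plan is to exploit the unital structure of $A$ through the Huijsmans--de Pagter isomorphism. Since $A$ is a unital $f$-algebra with unit $e$, we have $A^{\sim\sim}=(A^{\sim})_{n}^{\sim}$ (Corollary 3.4 in \cite{HdP1}), and this bidual carries an Arens product with unit $\widehat{e}$, where $\widehat{e}(f)=f(e)$. Hence, by Theorem 5.2 in \cite{HdP1}, the map $\nu:(A^{\sim})_{n}^{\sim}\rightarrow Orth(A^{\sim})$ is both surjective and injective, i.e. a Riesz and algebra isomorphism carrying $\widehat{e}$ to $\mathrm{id}_{A^{\sim}}$. First I would record that, under the embedding $P:A\rightarrow Orth(A)$, the center $Z(A)$ is identified with the principal ideal $A_{e}=\{a\in A:|a|\leq\lambda e\text{ for some }\lambda\geq0\}$, an $AM$-space whose closed unit ball is $\{a:|a|\leq e\}$; consequently, for every $g\in A^{\sim}$ the norm of the restriction $R(g):=g|_{Z(A)}$ equals $|g|(e)$ by the Riesz--Kantorovich formula.

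Now take an arbitrary $\Phi\in Z(A^{\sim})$; the goal is to produce $F\in Z(A)^{\prime\prime}$ with $m(F)=\Phi$. Using the isomorphism $\nu$, choose $G\in(A^{\sim})_{n}^{\sim}$ with $\nu(G)=\Phi$. Since $\Phi$ lies in the center we have $|\Phi|\leq\lambda\,\mathrm{id}_{A^{\sim}}$ for some $\lambda\geq0$, and because $\nu$ is a Riesz isomorphism with $\nu(\widehat{e})=\mathrm{id}_{A^{\sim}}$ this forces $|G|\leq\lambda\widehat{e}$ in $(A^{\sim})_{n}^{\sim}$. This single inequality is the heart of the argument: evaluating the positive functional inequality at $|g|\geq0$ yields, for every $g\in A^{\sim}$, the estimate $|G(g)|\leq|G|(|g|)\leq\lambda\,\widehat{e}(|g|)=\lambda|g|(e)=\lambda\|R(g)\|$.

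From this estimate two things follow at once. First, $G$ vanishes on $\ker R=\{g\in A^{\sim}:g|_{Z(A)}=0\}$, so $G$ factors through $R$: there is a well-defined linear functional $\widetilde{F}$ on $R(A^{\sim})\subseteq Z(A)^{\prime}$ with $\widetilde{F}(R(g))=G(g)$. Second, $\widetilde{F}$ is bounded with $\|\widetilde{F}\|\leq\lambda$, so a Hahn--Banach extension gives $F\in Z(A)^{\prime\prime}$ extending $\widetilde{F}$. It remains to verify $m(F)=\Phi$, which is a direct computation. Using commutativity of the Archimedean $f$-algebra $A$ one checks $\mu_{x,f}=(f\cdot x)|_{Z(A)}=R(f\cdot x)$, while $\nu_{G}(f)(x)=G(f\cdot x)$, whence
\[
m(F)(f)(x)=F(\mu_{x,f})=\widetilde{F}(R(f\cdot x))=G(f\cdot x)=\nu_{G}(f)(x)=\Phi(f)(x)
\]
for all $x\in A$ and $f\in A^{\sim}$, so $m(F)=\Phi$.

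The step I expect to be the main obstacle is the passage $|\Phi|\leq\lambda\,\mathrm{id}\Rightarrow|G|\leq\lambda\widehat{e}$ together with its consequence that $G$ is controlled by the \emph{restriction} norm $\|R(g)\|$ rather than the full dual norm on $A^{\sim}$. This is precisely where membership in the center $Z(A^{\sim})$ (and not merely in $Orth(A^{\sim})$) and the order-unit structure of $Z(A)$ are indispensable, and it is exactly what makes $G$ factor through the restriction to $Z(A)$. The remaining points -- that $P$ identifies $Z(A)$ with $A_{e}$, that $\nu$ is an isomorphism in the unital case, and the final identity $m(F)=\Phi$ -- are comparatively routine given Proposition \ref{p2} and the cited results of Huijsmans and de Pagter.
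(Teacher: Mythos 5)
Your proof is correct, and it takes a genuinely different route from the paper's. The paper argues by approximation and compactness: using the weak density of $A$ in $(A^{\sim})_{n}^{\sim}$ and the compatibility of the locally solid topology $|\sigma|((A^{\sim})_{n}^{\sim},A^{\sim})$ with the duality \cite[Theorem 11.13]{AB}, it produces a net $0\leq a_{\alpha}\leq 1$ in $A$ converging to the given central element (truncation is legitimate because lattice operations are $|\sigma|$-continuous and $1\in A$), then extracts via Alaoglu a subnet converging to some $T\in Z(A)^{\prime\prime}$ with $0\leq T\leq 1$, and checks $m(T)=F$ by matching the two limits $\lim_{\beta}f(a_{\beta}b)$. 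You instead argue by factorization and extension: from $|\Phi|\leq\lambda\,\mathrm{id}$ and $\nu(\widehat{e})=\mathrm{id}$ you deduce $|G|\leq\lambda\widehat{e}$, hence $|G(g)|\leq\lambda|g|(e)=\lambda\Vert g|_{Z(A)}\Vert$, so $G$ factors through the restriction map $R:A^{\sim}\rightarrow Z(A)^{\prime}$, and Hahn--Banach supplies $F\in Z(A)^{\prime\prime}$; the verification $m(F)=\Phi$ then reduces to the identity $\mu_{x,f}=R(f\cdot x)$, which is correct by commutativity. Your supporting facts all check out: $\nu$ is a unital Riesz and algebra isomorphism in the unital case (Theorem 5.2 of \cite{HdP1}), $Z(A)=P(A_{e})\cong A_{e}$ for a unital Archimedean $f$-algebra, and $\Vert R(g)\Vert=|g|(e)$ by the Riesz--Kantorovich formula, which is exactly what makes the factorization through $Z(A)^{\prime}$ work. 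What your route buys: it is net-free and quantitative ($\Vert\widetilde{F}\Vert\leq\lambda$), avoiding both the locally solid compatibility theorem and subnet extraction. What it loses: the Hahn--Banach extension need not be positive, whereas the paper's construction yields a preimage with $0\leq T\leq 1$; more significantly, the paper's proof exhibits a single net in $A$ converging simultaneously to $T$ in $\sigma(Z(A)^{\prime\prime},Z(A)^{\prime})$ and to $F$ in $\sigma((A^{\sim})_{n}^{\sim},A^{\sim})$, a feature re-used verbatim in the proof of Corollary \ref{c1}(1), so your argument, while a complete proof of this proposition, would not substitute there without modification.
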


\begin{proof}
Let $F\in Z(A^{\sim})$ with $0\leq F\leq1.$ Since $A$ is unital, Theorem 5.2
in \cite{HdP1} implies that the algebra and lattice homomorphism $\nu:(A^{\sim
})_{n}^{\sim}\rightarrow Orth(A^{\sim})$ of \cite{HdP1} is one-to-one and
onto. Therefore with a slight abuse of notation we will identify $(A^{\sim
})_{n}^{\sim}$ with $Orth(A^{\sim})$. In the duality $\langle(A^{\sim}%
)_{n}^{\sim},A^{\sim}\rangle$, $A$ is dense in $(A^{\sim})_{n}^{\sim}$ with
respect to the weak topology. Since the locally solid convex topology
$|\sigma|((A^{\sim})_{n}^{\sim},A^{\sim})$ is compatible with the duality
$\langle(A^{\sim})_{n}^{\sim},A^{\sim}\rangle$ \cite[Theorem 11.13, p.170]%
{AB}, there is a net $\{a_{\alpha}\}$ in $A$ such that $\{a_{\alpha}\}$
converges to $F$ in the $|\sigma|((A^{\sim})_{n}^{\sim},A^{\sim})$-topology.
Lattice operations are continuous in the locally solid convex $|\sigma
|((A^{\sim})_{n}^{\sim},A^{\sim})$-topology and $A$ is a sublattice of
$(A^{\sim})_{n}^{\sim}$ with $1\in A.$ Therefore we may suppose that $0\leq
a_{\alpha}\leq1.$ Hence, by the Alaoglu Theorem, there is $T\in Z(A)^{\prime
\prime}$ with $0\leq T\leq1$ such that a subnet $\{a_{\beta}\}$ converges to
$T$ in the $\sigma(Z(A)^{\prime\prime},Z(A)^{\prime})$-topology. Then, when
$f\in A^{\sim}$ and $b\in A,$%
\[
T\circ f(b)=T(\mu_{b,f})=\underset{\beta}{\lim}\mu_{b,f}(a_{\beta}%
)=\underset{\beta}{\lim}\psi_{b,f}(a_{\beta})=\underset{\beta}{\lim}%
f(a_{\beta}b).
\]
On the other hand, since $\{a_{\beta}\}$ converges to $F$ in the
$|\sigma|((A^{\sim})_{n}^{\sim},A^{\sim})$-topology implies convergence also
in the $\sigma((A^{\sim})_{n}^{\sim},A^{\sim})$-topology, we have%
\[
F\cdot f(b)=F(f\cdot b)=\underset{\beta}{\lim}f\cdot b(a_{\beta}%
)=\underset{\beta}{\lim}f(a_{\beta}b).
\]
Therefore $m(T)=F.$
\end{proof}

Note that if $E$ is a Riesz space and if we set $A=Orth(E)$, then $Z(A)=Z(E).$
Also, since then $A$ is a unital $f$-algebra with point separating order dual,
Proposition \ref{p4} will be true for $A.$ In what follows we will use these
facts repeatedly.

\begin{corollary}
\label{c1} Let $E$ be a Riesz space. Let $A=Orth(E)$ and $m_{A}$ denote the
Arens homomorphism of the bidual of $Z(A)$ onto $Z(A^{\sim})$ (where
$Z(A)=Z(E)$). Then\newline(1) the following diagram is commutative%
\[%
\begin{array}
[c]{ccccc}%
Z(E)^{\prime\prime} & \overset{m}{\rightarrow} & Z(E^{\sim}) & \overset
{i}{\rightarrow} & Orth(E^{\sim})\\
\shortparallel &  &  &  & \uparrow\gamma\\
Z(A)^{\prime\prime} & \overset{m_{A}}{\rightarrow} & Z(A^{\sim}) & \overset
{i}{\rightarrow} & (A^{\sim})_{n}^{\sim}%
\end{array}
\]
where $i$ denotes the natural inclusion map;\newline(2) $\gamma(Z((A^{\sim
})_{n}^{\sim}))=m(Z(E)^{\prime\prime}).$
\end{corollary}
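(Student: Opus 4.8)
The plan is to derive both assertions from the single identity $\gamma \circ i \circ m_A = i \circ m$ on $Z(E)''=Z(A)''$, which is precisely the commutativity of the outer rectangle in (1); assertion (2) will then drop out by combining this identity with Proposition \ref{p4}. Before computing I would pin down the identifications hidden in the diagram. Since $A=Orth(E)$ is a unital $f$-algebra, every orthomorphism of $A$ is multiplication by its value at the unit $e=I$, so $Orth(A)=A$ and $(Orth(A))^{\sim}=A^{\sim}$; under this identification the functional $\psi^{A}_{a,g}$ used to build $m_A$ becomes the Arens product $g\cdot a$ (this is exactly the computation $\widehat{\psi}_{a,g}=g\cdot a$ from the proof of Proposition \ref{p2}), and the ideal generated by $e$ in $A$ is $Z(E)$, so $Z(A)=Z(E)$. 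Likewise, because $A$ is unital, $\nu$ is a unital algebra-and-lattice isomorphism of $(A^{\sim})_{n}^{\sim}$ onto $Orth(A^{\sim})$, so the bottom-row map $i:Z(A^{\sim})\to (A^{\sim})_{n}^{\sim}$ is the genuine inclusion once these two spaces are identified via $\nu$.

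For (1) I would simply unwind the definitions on a fixed $F\in Z(E)''=Z(A)''$. Put $G=i(m_A(F))\in (A^{\sim})_{n}^{\sim}$, so that $\nu_G=m_A(F)$, i.e. $G\cdot g=m_A(F)(g)$ for every $g\in A^{\sim}$. Evaluating at $a\in A$ gives $G(g\cdot a)=(G\cdot g)(a)=m_A(F)(g)(a)=F(\psi^{A}_{a,g}|_{Z(A)})$. Now for $x\in E$ and $f\in E^{\sim}$ one has $\gamma(G)(f)(x)=G(\psi_{x,f})$, and since $e$ is the unit of the Arens product, $\psi_{x,f}=\psi_{x,f}\cdot e$. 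Substituting $g=\psi_{x,f}$ and $a=e$ into the previous identity, together with $\psi^{A}_{e,\psi_{x,f}}=\psi_{x,f}\cdot e=\psi_{x,f}$ and hence $\psi^{A}_{e,\psi_{x,f}}|_{Z(A)}=\psi_{x,f}|_{Z(E)}=\mu_{x,f}$, yields $\gamma(G)(f)(x)=F(\mu_{x,f})=m(F)(f)(x)$. As this holds for all $x$ and $f$, we obtain $\gamma(i(m_A(F)))=i(m(F))$, which is the asserted commutativity.

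Assertion (2) then follows formally. Under the $\nu$-identification the center $Z((A^{\sim})_{n}^{\sim})$ of the $f$-algebra $(A^{\sim})_{n}^{\sim}$ is carried onto $Z(A^{\sim})$, since a unital algebra-and-lattice isomorphism sends the ideal generated by the unit to the ideal generated by the unit. By Proposition \ref{p4} applied to the unital $f$-algebra $A$, the map $m_A$ is onto $Z(A^{\sim})$, so $i(Z(A^{\sim}))=i(m_A(Z(A)''))$; applying $\gamma$ and invoking the identity from (1) gives $\gamma(Z((A^{\sim})_{n}^{\sim}))=\gamma(i(m_A(Z(A)'')))=i(m(Z(A)''))=m(Z(E)'')$, because $Z(A)''=Z(E)''$ and $i$ is the inclusion of $Z(E^{\sim})$ into $Orth(E^{\sim})$.

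I expect the only real difficulty to be the bookkeeping of the nested identifications: keeping straight which $\psi$ and $\mu$ belong to the base space $E$ and which to the $f$-algebra $A=Orth(E)$, verifying $\psi^{A}_{a,g}=g\cdot a$ and $Z(A)=Z(E)$ under $Orth(A)=A$, and checking that the bottom-row inclusion $i$ is the one induced by $\nu$. The analytic content is negligible: because $A$ carries a unit, the identity $\psi_{x,f}=\psi_{x,f}\cdot e$ lets me avoid any weak-density or order-continuity approximation and collapses the whole verification to a one-line evaluation.
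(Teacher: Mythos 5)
Your proposal is correct, and while its overall skeleton (deduce (2) from the commutativity in (1) together with Proposition \ref{p4} and the $\nu$-identification of $(A^{\sim})_{n}^{\sim}$ with $Orth(A^{\sim})$) matches the paper, your verification of (1) takes a genuinely different route. The paper argues by approximation: as in the proof of Proposition \ref{p4}, it chooses a net $\{a_{\alpha}\}$ in $Z(A)$ with $0\leq a_{\alpha}\leq1$ converging to $F$ in the $\sigma(Z(A)^{\prime\prime},Z(A)^{\prime})$-topology and simultaneously to $i(m_{A}(F))$ in the $\sigma((A^{\sim})_{n}^{\sim},A^{\sim})$-topology, and then computes $\gamma(m_{A}(F))(f)(x)=\lim_{\alpha}\psi_{x,f}(a_{\alpha})=\lim_{\alpha}\mu_{x,f}(a_{\alpha})=F(\mu_{x,f})=m(F)(f)(x)$. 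You instead evaluate at the unit $e$ of $A=Orth(E)$: from $\nu_{G}=m_{A}(F)$ with $a=e$ you extract the closed-form identity $i(m_{A}(F))(g)=F(g|_{Z(A)})$ for all $g\in A^{\sim}$ (that is, $\nu^{-1}(i(m_{A}(F)))=F\circ R$, where $R:A^{\sim}\rightarrow Z(A)^{\prime}$ is the restriction map), and commutativity drops out by substituting $g=\psi_{x,f}$, with no nets or weak-density at all. Your route is more elementary, and it also makes explicit the step the paper leaves terse: the asserted simultaneous double convergence ``as in the proof of Proposition \ref{p4}'' is exactly what your identity guarantees, since $\sigma(Z(A)^{\prime\prime},Z(A)^{\prime})$-convergence $a_{\alpha}\rightarrow F$ gives $g(a_{\alpha})=(g|_{Z(A)})(a_{\alpha})\rightarrow F(g|_{Z(A)})$ for each $g\in A^{\sim}$, identifying the $\sigma((A^{\sim})_{n}^{\sim},A^{\sim})$-limit as $i(m_{A}(F))$. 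The price you pay is the bookkeeping you flag yourself: $Orth(A)=A$ and $Z(A)=Z(E)$ for the Archimedean unital $f$-algebra $A$ (valid, since an orthomorphism is band preserving and so vanishes if it kills the weak order unit $e$, whence every orthomorphism on $A$ is multiplication by its value at $e$), and the fact that $\nu$, being a unital algebra and lattice isomorphism, carries the ideal generated by the unit of $(A^{\sim})_{n}^{\sim}$ onto $Z(A^{\sim})$ --- facts the paper also uses, mostly implicitly. Your part (2) is then essentially identical to the paper's.
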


\begin{proof}
(1) Suppose $F\in Z(A)^{\prime\prime}$ with $0\leq F\leq1.$ Then, as in the
proof of Proposition \ref{p4}, let $0\leq a_{\alpha}\leq1$ be a net in
$Z(A)\subset A$ such that $\{a_{\alpha}\}$ converges to $F$ in the
$\sigma(Z(A)^{\prime\prime},Z(A)^{\prime})$-topology and also to $m_{A}%
(F)\in(A^{\sim})_{n}^{\sim}$ in the $\sigma((A^{\sim})_{n}^{\sim},A^{\sim}%
)$-topology. Since $(A^{\sim})_{n}^{\sim}=(Orth(E)^{\sim})_{n}^{\sim},$ for
each $x\in E$ and $f\in E^{\sim},$ we have
\begin{align*}
\gamma(m_{A}(F))(f)(x)  &  =m_{A}(F)\bullet f(x)=m_{A}(F)(\psi_{x,f}%
)=\underset{\alpha}{\lim}\psi_{x,f}(a_{\alpha})\\
&  =\underset{\alpha}{\lim}\mu_{x,f}(a_{\alpha})=F(\mu_{x,f})=m(F)(f)(x).
\end{align*}

(2) As in proof of Proposition \ref{p4}, we may identify $(A^{\sim})_{n}%
^{\sim}$ with $Orth(A^{\sim})$ via the homomorphism $\nu$. Then $Z(A^{\sim
})=Z((A^{\sim})_{n}^{\sim}).$ Since, by Proposition \ref{p4}, $m_{A}$ is onto
$Z(A^{\sim}),$ from part (1), we have $\gamma(Z((A^{\sim})_{n}^{\sim
})=m(Z(E)^{\prime\prime}).$
\end{proof}

\begin{corollary}
\label{c2} Let $E$ be a Riesz space. Then $\gamma((Orth(E)^{\sim})_{n}^{\sim
})$ is an order ideal in $Orth(E^{\sim})$ if and only if $m(Z(E)^{\prime
\prime})=Z(E^{\sim}).$
\end{corollary}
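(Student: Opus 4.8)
The plan is to reduce the stated equivalence to a concrete surjectivity question about the restriction of $\gamma$ to centers, and then to settle the two implications by lattice‑theoretic lifting arguments. I would write $B=(Orth(E)^{\sim})_{n}^{\sim}$, so that $\gamma:B\rightarrow Orth(E^{\sim})$ is the unital, order continuous Riesz homomorphism of Proposition \ref{p1}, and let $Z(B)$ denote the order ideal generated by the unit $1_{B}$ of $B$. By Corollary \ref{c1}(2) one has $m(Z(E)^{\prime\prime})=\gamma(Z(B))$, so the assertion to be proved is precisely: $\gamma(B)$ is an order ideal in $Orth(E^{\sim})$ if and only if $\gamma(Z(B))=Z(E^{\sim})$. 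I would first record the easy inclusion that holds unconditionally: since $\gamma$ is a positive unital homomorphism with $\gamma(1_{B})=I$ (the identity orthomorphism), $|F|\le n1_{B}$ forces $|\gamma(F)|\le nI$, whence $\gamma(Z(B))\subseteq Z(E^{\sim})$ always. Thus in both directions only the reverse inclusion (or its consequence) is at stake.

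For the forward implication I would assume $\gamma(B)$ is an order ideal. Because $I=\gamma(1_{B})\in\gamma(B)$, any $\phi\in Z(E^{\sim})$ with $0\le\phi\le I$ already lies in $\gamma(B)$ by solidity; choosing $F\in B$ with $\gamma(F)=\phi$ and replacing $F$ by $F^{+}\wedge 1_{B}\in Z(B)$ (using that $\gamma$ is a Riesz homomorphism, so $\gamma(F^{+}\wedge 1_{B})=\phi^{+}\wedge I=\phi$) exhibits $\phi\in\gamma(Z(B))$. Since $Z(E^{\sim})$ is the ideal generated by $I$, it is the linear span of the positive order intervals $[0,\lambda I]$, and $\gamma(Z(B))$ is a linear subspace, so this gives $Z(E^{\sim})\subseteq\gamma(Z(B))$ and hence equality.

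The converse is the substantive direction and is where I expect the only real difficulty. Assuming $\gamma(Z(B))=Z(E^{\sim})$, I would take $\psi\in Orth(E^{\sim})$ with $|\psi|\le|S|$ for some $S=\gamma(F)\in\gamma(B)$; since $\gamma(B)$ is a Riesz subspace, splitting into positive and negative parts reduces matters to showing $0\le\psi\le\gamma(T)\in\gamma(B)$ with $T=|F|\in B_{+}$. The unit $I$ is a weak order unit of the Archimedean $f$-algebra $Orth(E^{\sim})$ (orthogonality forces a vanishing product), and $Orth(E^{\sim})$ is Dedekind complete, so $\psi_{n}:=\psi\wedge nI\uparrow\psi$ with each $\psi_{n}\in Z(E^{\sim})=\gamma(Z(B))$. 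I would lift each $\psi_{n}$ to some $G_{n}\in Z(B)_{+}$ with $\gamma(G_{n})=\psi_{n}$ and pass to $\bigvee_{k\le n}G_{k}$ so that $G_{n}\uparrow$. The obstacle is that $\gamma$ need not be injective (Examples \ref{ex1}, \ref{ex2}), so $\gamma(G_{n})\le\gamma(T)$ does not give $G_{n}\le T$ and the lifts need not be order bounded in $B$. I would resolve this by replacing $G_{n}$ with $H_{n}:=G_{n}\wedge T\in B_{+}$: since $\gamma$ is a lattice homomorphism and $\psi_{n}\le\psi\le\gamma(T)$, one still has $\gamma(H_{n})=\psi_{n}\wedge\gamma(T)=\psi_{n}$, while now $H_{n}\uparrow$ and $H_{n}\le T$. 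As $B$ is Dedekind complete, $H:=\sup_{n}H_{n}$ exists, and order continuity of $\gamma$ yields $\gamma(H)=\sup_{n}\psi_{n}=\psi$. Hence $\psi\in\gamma(B)$, so $\gamma(B)$ is solid, that is, an order ideal. The only facts this plan leans on beyond Corollary \ref{c1} are that $I$ is a weak order unit (giving $\psi\wedge nI\uparrow\psi$ in the Dedekind complete $Orth(E^{\sim})$), the Dedekind completeness of $B$, and the order continuity and Riesz homomorphism properties of $\gamma$ from Proposition \ref{p1}.
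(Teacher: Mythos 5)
Your argument is correct, and it splits cleanly against the paper's proof: the forward implication is essentially the paper's, while the converse takes a genuinely different route. In the forward direction both proofs pull $0\le\phi\le I=\gamma(1_{B})$ back into $\gamma(B)$ by solidity and then trim the preimage into $Z(B)$ before invoking Corollary \ref{c1}(2); the paper trims by means of the band projection $e$ whose complement $1-e$ projects onto $Ker(\gamma)$, whereas your replacement $F\mapsto F^{+}\wedge 1_{B}$ achieves the same thing more directly. For the converse, the paper's proof is a one-step factorization: given $0\le T\le\gamma(F)$, Dedekind completeness of $Orth(E^{\sim})$ yields a central multiplier $\widehat{T}\in Z(Orth(E^{\sim}))=Z(E^{\sim})$ with $0\le\widehat{T}\le 1$ and $T=\widehat{T}\gamma(F)$; lifting $\widehat{T}$ to $G\in Z((Orth(E)^{\sim})_{n}^{\sim})$ via Corollary \ref{c1}(2) and using that $\gamma$ is an \emph{algebra} homomorphism gives $T=\gamma(G)\gamma(F)=\gamma(G\cdot F)$ at once. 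You avoid both the central-multiplier factorization and the multiplicativity of $\gamma$: you truncate ($\psi\wedge nI\uparrow\psi$, which needs only that $I$ is a weak order unit of the Archimedean space $Orth(E^{\sim})$, not its Dedekind completeness), lift each truncation into $Z(B)$, and — correctly diagnosing that non-injectivity of $\gamma$ (your Examples \ref{ex1}, \ref{ex2}) blocks order-boundedness of arbitrary lifts — repair this with $H_{n}=G_{n}\wedge T$, finishing by Dedekind completeness of $B$ and order continuity of $\gamma$. The paper's route is shorter and exploits the $f$-algebra structure of the bidual; yours is more elementary in its toolkit (only the Riesz-homomorphism and order-continuity parts of Proposition \ref{p1}, with the algebra structure confined to Corollary \ref{c1}(2)) and would transfer to any order continuous Riesz homomorphism satisfying $\gamma(Z(B))=Z(E^{\sim})$, at the price of the monotone-approximation bookkeeping.
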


\begin{proof}
Let $A=Orth(E).$ Suppose $\gamma((A^{\sim})_{n}^{\sim})$ is an order ideal in
$Orth(E^{\sim}).$ Since $\gamma$ is an order continuous lattice homomorphism,
the kernel $Ker(\gamma)$ is a band in $(A^{\sim})_{n}^{\sim}.$ Let $(1-e)$ be
the band projection of $(A^{\sim})_{n}^{\sim}$ onto $Ker(\gamma).$ Then
$e\cdot(A^{\sim})_{n}^{\sim}$ is algebra and lattice isomorphic to the order
ideal $\gamma((A^{\sim})_{n}^{\sim})$ with $\gamma(e)=1.$ Let $T\in Z(E^{\sim
})$ with $0\leq T\leq1=\gamma(e).$ So there is $F\in(A^{\sim})_{n}^{\sim}$
such that $\gamma(F)=T.$ Moreover, since $\gamma$ is an algebra and lattice
homomorphism, we may choose $F$ so that $0\leq F\leq e\leq1.$ That is we may
suppose that $F\in Z((A^{\sim})_{n}^{\sim}).$ Then Corollary \ref{c1} part (2)
implies there is $G\in Z(E)^{\prime\prime}$ such that $m(G)=\gamma(F)=T.$

Conversely, suppose $m(Z(E)^{\prime\prime})=Z(E^{\sim}).$ Suppose $0\leq
T\leq\gamma(F)$ for some $0\leq F\in(A^{\sim})_{n}^{\sim}$ and for some $T\in
Orth(E^{\sim}).$ Since $Orth(E^{\sim})$ is Dedekind complete, there is
$\widehat{T}\in Z(Orth(E^{\sim}))=Z(E^{\sim})$ such that $0\leq\widehat{T}%
\leq1$ and $T=\widehat{T}\gamma(F).$ By Corollary \ref{c1} part (2), there is
$G\in Z((A^{\sim})_{n}^{\sim})$ such that $\gamma(G)=\widehat{T}$ . Then
$T=\gamma(G)\gamma(F)=\gamma(G\cdot F)$ where $G\cdot F\in(A^{\sim})_{n}%
^{\sim}.$
\end{proof}

Corollary \ref{c2} gives the first part of the result stated in the abstract.
We will complete the result by showing that the only Riesz spaces that satisfy
Corollary \ref{c2} are necessarily those with a topologically full center. For
Banach lattices a proof of this was given in \cite{O1}. Initially we will give
the proof of the sufficiency.

\begin{proposition}
\label{p5} Let $E$ be a Riesz space. Then $E$ has a topologically full center
if and only if the Arens homomorphism $m:Z(E)^{\prime\prime}\rightarrow
Z(E^{\sim})$ is surjective. Then there exits an idempotent $\pi\in
Z(E)^{\prime\prime}$ such that $Z(E^{\sim})=\pi\cdot Z(E)^{\prime\prime}$ and
$Ker(m)=(1-\pi)\cdot Z(E)^{\prime\prime}.$
\end{proposition}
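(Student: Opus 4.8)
The plan is to prove the two implications separately and to read off the idempotent from surjectivity; I would dispose of the idempotent and the easier necessity direction first, and then attack sufficiency, where the real work lies. For the idempotent: by Proposition \ref{p3} the map $m$ is an order continuous lattice homomorphism, so $\mathrm{Ker}(m)$ is a band in $Z(E)^{\prime\prime}$. Since $Z(E)^{\prime\prime}\cong C(S)$ is Dedekind complete, this band is a projection band; writing the complementary band projection as multiplication by an idempotent $\pi\in Z(E)^{\prime\prime}$ gives $\mathrm{Ker}(m)=(1-\pi)\cdot Z(E)^{\prime\prime}$ and $Z(E)^{\prime\prime}=\pi\cdot Z(E)^{\prime\prime}\oplus(1-\pi)\cdot Z(E)^{\prime\prime}$. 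Because $m(1)=I$ and $m(1-\pi)=0$ we get $m(\pi)=I$, and $m$ restricted to $\pi\cdot Z(E)^{\prime\prime}$ is injective with the same range as $m$. Once surjectivity is known, $m|_{\pi\cdot Z(E)^{\prime\prime}}$ is a bijection onto $Z(E^{\sim})$, which is the asserted identification $Z(E^{\sim})=\pi\cdot Z(E)^{\prime\prime}$.

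For necessity I would argue the contrapositive. If $E$ is not topologically full, pick $x\in E_{+}$ and $y$ in the ideal generated by $x$ with $y$ outside the $\sigma(E,E^{\sim})$-closure of the subspace $Z(E)x$. By Hahn--Banach separation there is $f\in E^{\sim}$ with $f(Sx)=0$ for all $S\in Z(E)$ but $f(y)\neq0$; thus $\mu_{x,f}=0$, and hence $m(F)(f)(x)=F(\mu_{x,f})=0$ for every $F\in Z(E)^{\prime\prime}$. Taking $S=I$ gives $f(x)=0$, while $|f|(x)>0$ (otherwise $f$ would vanish on the whole ideal generated by $x$, contradicting $f(y)\neq0$); hence $f^{+}(x)=f^{-}(x)=\tfrac12|f|(x)>0$. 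Let $P\in Z(E^{\sim})$ be the band projection of $E^{\sim}$ onto the band generated by $f^{+}$ (recall $E^{\sim}$ is Dedekind complete). Then $Pf=f^{+}$, so $P(f)(x)=f^{+}(x)>0$, which places $P$ outside the range of $m$. Therefore $m$ is not surjective.

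Sufficiency is the substantial direction. Assume $E$ is topologically full and fix $T\in Z(E^{\sim})$ with $0\le T\le I$; I want $F\in Z(E)^{\prime\prime}$ with $m(F)=T$. The scheme is to produce a norm-bounded net $\{S_{\alpha}\}$ in $Z(E)$ with $f(S_{\alpha}x)\to\langle Tf,x\rangle$ for all $x\in E$, $f\in E^{\sim}$; then, by the Alaoglu theorem, a weak$^{*}$ cluster point $F\in Z(E)^{\prime\prime}$ satisfies $m(F)(f)(x)=F(\mu_{x,f})=\lim_{\alpha}f(S_{\alpha}x)=\langle Tf,x\rangle$, i.e.\ $m(F)=T$. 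Producing such a net is the same as showing that $\{S^{\prime}:S\in Z(E),\ 0\le S\le I\}$ is dense in $\{T\in Z(E^{\sim}):0\le T\le I\}$ for the topology of the pairing $(T,f\otimes x)\mapsto\langle Tf,x\rangle$. By the support-function (bipolar) criterion this density is equivalent to the equality, for every finite family $\{(x_{k},f_{k})\}$,
\[
\sup_{S\in Z(E),\,0\le S\le I}\ \sum_{k}f_{k}(Sx_{k})\ =\ \sup_{T\in Z(E^{\sim}),\,0\le T\le I}\ \sum_{k}\langle Tf_{k},x_{k}\rangle ,
\]
the inequality $\le$ being clear since each $S^{\prime}$ lies in $[0,I]$. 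To evaluate these suprema I would localize: set $x=\bigvee_{k}|x_{k}|\in E_{+}$ and pass to the principal ideal generated by $x$, whose AM-completion is some $C(K_{x})$ with unit corresponding to $x$. Restriction to this ideal is a Riesz homomorphism, so each $f_{k}$ becomes a measure on $K_{x}$, each $S\in Z(E)$ becomes multiplication by a function $\rho_{S}\in C(K_{x})$, and both suprema turn into positive-part (total variation) computations for one explicit signed measure $\lambda$; the left-hand supremum runs over the unital sublattice $W=\{\rho_{S}\}$ intersected with $[0,1]$, the right-hand one over all of $[0,1]$ in $C(K_{x})$. Topological fullness says precisely that $W$ is $\sigma(E,E^{\sim})$-dense in the ideal, and the goal is to convert this density into the equality of the two suprema.

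The main obstacle is exactly this last step. Topological fullness is a density statement for the \emph{unbounded} linear set $Z(E)x$ in the \emph{weak} topology $\sigma(E,E^{\sim})$, whereas the Alaoglu lifting forces me to approximate using the \emph{order-bounded} set $\{S:0\le S\le I\}$. Passing between the two is delicate because the truncation $S\mapsto(S^{+})\wedge I$, although it keeps one inside $[0,I]$, is not continuous for $\sigma(E,E^{\sim})$; in the localized picture the difficulty is to show that the sublattice $W$ does not merge the positive and negative parts of $\lambda$, which is exactly what the displayed supremum equality records. Establishing that fullness upgrades weak density of the span $Z(E)x$ to weak density of the order interval $\{Sx:0\le S\le I\}$ in $[0,x]$ — equivalently, proving the supremum identity — is the crux, and once it is in hand the remaining passages (Alaoglu, the computation $m(F)=T$, and the order-ideal consequence linking back to Corollary \ref{c2}) are formal.
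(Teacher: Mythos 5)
Your proof of the implication ``$m$ surjective $\Rightarrow$ topologically full center'' is correct, and it is a genuinely different route from the paper's: the paper argues through modules (every $\sigma(E^{\sim},E)$-closed $Z(E)$-submodule of $E^{\sim}$ is a $Z(E)^{\prime\prime}$-submodule by a weak$^{*}$ limit computation, hence a $Z(E^{\sim})$-submodule by surjectivity, hence an order ideal since $E^{\sim}$ is Dedekind complete), whereas your contrapositive via Hahn--Banach separation, the observation $\mu_{x,f}=0$, and the band projection onto the band generated by $f^{+}$ is more elementary and quite clean (the computation $f(x)=0$, $f^{+}(x)=f^{-}(x)=\tfrac12|f|(x)>0$, $Pf=f^{+}$ all check out). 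The idempotent bookkeeping is also fine and matches the paper: $\mathrm{Ker}(m)$ is a band by order continuity (Proposition \ref{p3}), $m(\pi)=m(1)=I$, and $m$ restricted to $\pi\cdot Z(E)^{\prime\prime}$ is injective, giving the stated identification once surjectivity is known.

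The genuine gap is in the converse, which is exactly where the paper's entire technical apparatus lives, and you explicitly leave it open. Your reduction of ``topologically full $\Rightarrow$ $m$ surjective'' to the displayed supremum identity (equivalently, to weak density of the order-bounded set $\{Sx:0\le S\le I\}$ in $[0,x]$, rather than of the unbounded span $Z(E)x$) is faithful and the Alaoglu lifting from that density to surjectivity is correct; but the identity itself is the whole content of the implication, and your final paragraph concedes you cannot establish it -- the truncation $S\mapsto (S\vee 0)\wedge I$ is indeed not $\sigma(E,E^{\sim})$-continuous, so nothing in the proposal bridges the two densities. This is precisely the step the paper spends Lemmas \ref{L1}--\ref{L3} and the patching argument on: it localizes at each principal ideal $I(x)$, identifies $A^{\prime\prime}=(Z(E)/Ann(x))^{\prime\prime}$ inside $Z(E)^{\prime\prime}$ (Lemmas \ref{L1}, \ref{L2}), uses cyclicity plus the Radon--Nikodym theorem in $C(K)^{\prime}$ to produce local multipliers $a_{f}\in L^{\infty}(\widehat{\mu}_{u,f})$ with $Tf=a_{f}\circ f$, verifies the compatibility $a_{g}\cdot P_{f}=a_{f}\cdot P_{g}$, and glues first over $f\in J_{+}$ and then over $x\in E_{+}$ by taking suprema of idempotents in the Stonian bidual (the projections $P_{f}$ and $e_{x}$). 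For what it is worth, your plan is salvageable along your own lines: since $J$ is an ideal in $I(x)^{\sim}$, every $L^{\infty}(|\lambda|)$-functional on $L^{1}(|\lambda|)$ is represented inside the relevant dual pair, so fullness gives weak density of the sublattice $W$ in $L^{1}(|\lambda|)$; a Mazur-type argument (weak and norm closures of convex sets coincide) then yields norm-convergent convex combinations still lying in the linear space $W$, and the truncation $(\cdot\vee 0)\wedge 1$ \emph{is} $L^{1}$-norm continuous and keeps you in $W\cap[0,1]$ because $W$ is a unital sublattice -- which would prove your supremum identity. But as submitted, the crux is named and not closed, so the proposal is not a proof of the proposition.
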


\begin{proof}
(Sufficiency) Suppose $m$ is surjective. To show that $E$ has topologically
full center it is sufficient to show that each $\sigma(E,E^{\sim})$-closed
$Z(E)$ submodule of $E$ is an ideal. This is equivalent to showing that each
$\sigma(E^{\sim},E)$-closed $Z(E)$ submodule of $E^{\sim}$ is an ideal. Let
$M$ be a $\sigma(E^{\sim},E)$-closed $Z(E)$ submodule of $E^{\sim}$ and let
$T\in Z(E)^{\prime\prime}.$ There is a net $\{T_{\alpha}\}$ in $Z(E)$ that
converges to $T$ in the $\sigma(Z(E)^{\prime\prime},Z(E)^{\prime})$-topology.
For each $x\in E$ and $f\in M$ we have%
\[
T_{\alpha}\circ f(x)=\mu_{x,f}(T_{\alpha})\rightarrow T(\mu_{x,f})=T\circ
f(x).
\]
Hence $M$ is a $Z(E)^{^{\prime\prime}}$-submodule of $E^{\sim}.$ Since
$Z(E^{\sim})=m(Z(E)^{\prime\prime}),$ $M$ is a $Z(E^{\sim})$-submodule.
$E^{\sim}$ is Dedekind complete. It is well known that a subspace of $E^{\sim
}$ is an ideal in $E^{\sim}$ if and only if it is a $Z(E^{\sim})$-submodule of
$E^{\sim}$ (e.g., \cite{W2}). Hence $M$ is an ideal in $E^{\sim}.$ Since $m$
is order continuous (Proposition \ref{p3}), $Ker(m)$ is a band in
$Z(E)^{\prime\prime}.$ Hence there exists a band projection $\pi\in
Z(E)^{\prime\prime}$ such that $Ker(m)=(1-\pi)\cdot Z(E)^{\prime\prime}$ and
$Z(E^{\sim})=\pi\cdot Z(E)^{\prime\prime}.$
\end{proof}

The proof of the converse requires some preparatory results.

Given a Riesz space $E$, let $A$ be a unital subalgebra of $Z(E).$ Let $A^{o}$
denote the polar of $A$ in $Z(E)^{\prime}$ and let $A^{oo}$ denote the polar
of $A^{o}$ in $Z(E)^{\prime\prime}.$ By standard duality theory, we have that
$A^{\prime}=Z(E)^{\prime}/A^{0}$ and $A^{\prime\prime}=A^{00}\subset
Z(E)^{\prime\prime}.$ Since $A$ is a normed algebra, $A^{\prime\prime}$ is a
Banach algebra with the Arens product \cite{Ar}. In fact $A^{\prime\prime}$ is
a subalgebra of $Z(E)^{\prime\prime}$ when $Z(E)^{\prime\prime}$ has its Arens product.

\begin{lemma}
\label{L1} Let $A$ be a unital subalgebra of $Z(E).$ Then $A^{00}$ is a
subalgebra of $Z(E)^{\prime\prime}$ and the algebra product on $A^{00}$ is
identical with the Arens product on $A^{\prime\prime}$ under the canonical
isomorphism of $A^{00}$ with $A^{\prime\prime}.$
\end{lemma}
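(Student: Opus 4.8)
The plan is to prove that $A^{00}$, which sits inside $Z(E)^{\prime\prime}$, is a subalgebra under the Arens product of $Z(E)^{\prime\prime}$, and that this product coincides with the intrinsic Arens product on $A^{\prime\prime}$ under the canonical weak-star homeomorphism $A^{\prime\prime} \cong A^{00}$. The crucial structural fact I would lean on is the standard duality theory already quoted in the excerpt: $A^{\prime} = Z(E)^{\prime}/A^{0}$ and $A^{\prime\prime} = A^{00}$, where the canonical isomorphism $\iota: A^{\prime\prime} \to A^{00}$ is a weak-star homeomorphism. Under this identification, each $F \in A^{\prime\prime}$ corresponds to an element $\iota(F) \in Z(E)^{\prime\prime}$ whose restriction to $A^{0}$ vanishes; conversely, evaluating $\iota(F)$ against a functional $g \in Z(E)^{\prime}$ only sees the coset $g + A^{0} \in A^{\prime}$.

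**The key computation.** The heart of the argument is to unwind both Arens products at the level of the defining three-step equations and check they match. Let me fix $F, G \in A^{\prime\prime}$ and write $\widetilde{F} = \iota(F)$, $\widetilde{G} = \iota(G)$ for their images in $Z(E)^{\prime\prime}$. I must verify $\iota(F \cdot_{A} G) = \widetilde{F} \cdot_{Z} \widetilde{G}$, where $\cdot_{A}$ and $\cdot_{Z}$ are the Arens products on $A^{\prime\prime}$ and $Z(E)^{\prime\prime}$ respectively. By the definition of the Arens product recalled in the introduction, it suffices to test against an arbitrary $g \in Z(E)^{\prime}$ and show
\[
(\widetilde{F} \cdot_{Z} \widetilde{G})(g) = (F \cdot_{A} G)(\overline{g}),
\]
where $\overline{g} = g + A^{0} \in A^{\prime}$ is the image of $g$ under the quotient map $Z(E)^{\prime} \to Z(E)^{\prime}/A^{0} = A^{\prime}$. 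Chasing the definitions, $(\widetilde{F} \cdot_{Z} \widetilde{G})(g) = \widetilde{F}(\widetilde{G} \cdot_{Z} g)$, and I would show that the functional $\widetilde{G} \cdot_{Z} g \in Z(E)^{\prime}$ has image $\overline{G \cdot_{A} g}$ under the quotient map, i.e. that forming $\widetilde{G} \cdot_{Z} g$ and then passing to the quotient agrees with forming $G \cdot_{A} \overline{g}$ in $A^{\prime}$. This in turn reduces, at the bottom step, to the compatibility of the single-sided action $g \cdot a$ for $a \in A$: since $A$ is a subalgebra of $Z(E)$, the element $g \cdot a \in Z(E)^{\prime}$ defined by $(g \cdot a)(b) = g(ab)$ restricts, when $b$ ranges only over $A$, to exactly the functional $\overline{g} \cdot a$ that defines the Arens product on $A^{\prime\prime}$. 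Because $A^{0}$ is precisely the set of functionals annihilating $A$, and because $A$ is closed under multiplication, the quotient map intertwines the two one-sided actions, and the three-step definitions then propagate this agreement upward.

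**Where the work lies.** The main obstacle is bookkeeping rather than a deep idea: I must be careful that the quotient map $Z(E)^{\prime} \to A^{\prime}$ and the inclusion $A^{00} \hookrightarrow Z(E)^{\prime\prime}$ are genuinely dual to the inclusion $A \hookrightarrow Z(E)$, and that $A^{0}$ is stable under the relevant one-sided Arens actions so that the maps descend to the quotient. Concretely, I would first check that for $a \in A$ and $g \in A^{0}$ one has $g \cdot a \in A^{0}$ (using that $A$ is a subalgebra, so $ab \in A$ for $b \in A$), which guarantees the first-step action is well defined on $A^{\prime} = Z(E)^{\prime}/A^{0}$; then that $\widetilde{F}$ annihilates $A^{0} \cdot A$-type functionals appropriately so that the second step descends; and finally assemble the third step. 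Unitality of $A$ ensures $A$ is weak-star dense in $A^{00}$, which lets me verify the identities on the dense subalgebra $A$ and extend by separate weak-star continuity of the Arens product in each variable. Once the three levels are matched on $A$ and extended by continuity, the equality $\iota(F \cdot_{A} G) = \widetilde{F} \cdot_{Z} \widetilde{G}$ follows, proving simultaneously that $A^{00}$ is closed under the $Z(E)^{\prime\prime}$-Arens product (hence a subalgebra) and that $\iota$ is an algebra isomorphism.
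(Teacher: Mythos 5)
Your core argument is correct, but it takes a genuinely different route from the paper. The paper proves both assertions by weak-star approximation: it chooses nets $\{a_{\alpha}\}$, $\{b_{\beta}\}$ in $A$ converging to the relevant bidual elements in the $\sigma(Z(E)^{\prime\prime},Z(E)^{\prime})$-topology and evaluates both products through the iterated-limit formula $F\cdot G(f)=\lim_{\alpha}\lim_{\beta}f(a_{\alpha}b_{\beta})$, so everything there rests on Goldstine-type density of $A$ in $A^{00}$ and on transferring weak-star convergence across $A^{\prime}=Z(E)^{\prime}/A^{0}$. You instead run a purely algebraic chase through the three defining equations: the quotient map $Z(E)^{\prime}\to A^{\prime}$ intertwines the bottom-level actions, $\overline{g\cdot a}=\overline{g}\cdot a$ for $a\in A$ (well defined because $A^{0}\cdot A\subset A^{0}$, which is where the subalgebra hypothesis enters), hence $(\widetilde{G}\cdot_{Z} g)|_{A}=G\cdot_{A}\overline{g}$, hence $(\widetilde{F}\cdot_{Z}\widetilde{G})(g)=\widetilde{F}(\widetilde{G}\cdot_{Z} g)=F(G\cdot_{A}\overline{g})=\iota(F\cdot_{A}G)(g)$. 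Since $\iota$ maps onto $A^{00}$, this one identity simultaneously yields closure of $A^{00}$ under the $Z(E)^{\prime\prime}$-product and the identification of the two products. Your version is more elementary and slightly more general: it uses no nets, no density, no continuity, and in fact no unitality. The paper's net computation, on the other hand, sets up exactly the approximation machinery that is reused verbatim in Lemma \ref{L2} and in the necessity half of Proposition \ref{p5}, which is presumably why the authors argue that way. (Minor typo in your text: $\overline{G\cdot_{A}g}$ should read $G\cdot_{A}\overline{g}$.)

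One caveat deserves correction. Your closing sentence invokes ``separate weak-star continuity of the Arens product in each variable,'' and that statement is false in general: the (first) Arens product $F\cdot G$ is $\sigma(Z(E)^{\prime\prime},Z(E)^{\prime})$-continuous in $F$ for fixed $G$, but continuous in $G$ only when $F$ lies in the canonical image of $A$ (more generally, in the topological centre); full separate weak-star continuity fails for non-Arens-regular algebras, and this is precisely why the paper's proof uses an \emph{iterated} limit in a fixed order rather than a joint or symmetric limit. Fortunately the remark is redundant in your proposal: the definitional chase of your middle paragraph already establishes $\iota(F\cdot_{A}G)=\widetilde{F}\cdot_{Z}\widetilde{G}$ for all $F,G\in A^{\prime\prime}$ with no approximation whatsoever. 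Simply delete the density-and-extension coda, or, if you wish to keep an approximation argument, replace it with the correct one-sided continuity and the iterated-limit computation as in the paper.
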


\begin{proof}
Let $f\in A^{0}$ and $a\in A.$ It is easily checked that $f\cdot a\in A^{0}.$
Let $F,G\in A^{00}.$ There is a net $\{a_{\alpha}\}$ in $A$ that converges to
$F$ in the $\sigma(Z(E)^{\prime\prime},Z(E)^{\prime})$-topology. Let $f\in
A^{0}.$ Then%
\[
F\cdot G(f)=F(G\cdot f)=\underset{\alpha}{\lim}G\cdot f(a_{\alpha}%
)=\underset{\alpha}{\lim}G(f\cdot a_{\alpha})=0
\]
and $F\cdot G\in A^{00}.$

On the other hand, denote the isomorphism of $A^{\prime\prime}$ onto $A^{00}$
by $F\rightarrow\widehat{F}.$ Given $f\in A^{\prime}$, let $\widehat{f}\in
Z(E)^{\prime}$ denote any extension of $f$ on $Z(E).$ When $F,G\in
A^{\prime\prime},$ let $\{a_{\alpha}\}$ and $\{b_{\beta}\}$ be nets in $A$
that converge to $\widehat{F}$ and $\widehat{G}$ respectively in the
$\sigma(Z(E)^{\prime\prime},Z(E)^{\prime})$-topology. Then, evidently, the
respective nets also converge to $F$ and $G$ respectively in the
$\sigma(A^{\prime\prime},A^{\prime})$-topology. For any $f\in A^{\prime},$ we
have
\[
F\cdot G(f)=\underset{\alpha}{\lim}\underset{\beta}{\lim}f(a_{\alpha}b_{\beta
})=\underset{\alpha}{\lim}\underset{\beta}{\lim}\widehat{f}(a_{\alpha}%
b_{\beta})=\widehat{F}\cdot\widehat{G}(\widehat{f}).
\]

\end{proof}

\begin{lemma}
\label{L2} Let $I$ be a closed algebra ideal in $Z(E)$ and consider $A=Z(E)/I$
with the quotient norm. Then $A$ is a normed algebra and with the Arens
product $A^{\prime\prime}$ may be identified with the subalgebra of
$Z(E)^{\prime\prime}$ given by
\[
(I^{00})^{d}=\{F\in Z(E)^{\prime\prime}:|F|\wedge|G|=0\text{ for all }G\in
I^{00}\}.
\]

\begin{proof}
Let $\widehat{Z(E)}=C(K^{\prime})$ for some compact Hausdorff space
$K^{\prime}$ and let $\overline{I}$ denote the closure of $I$ in $C(K^{\prime
}).$ There is a closed subset $K$ of $K^{\prime}$ such that $\overline
{I}=\{a\in C(K^{\prime}):a(K)=\{0\}\}$ and $C(K^{\prime})/\overline{I}=C(K).$
Furthermore $A$ is a subalgebra of $C(K)$. In fact $C(K)$ is the completion of
$A.$ Since $\overline{I}$ is an order ideal in $\widehat{Z(E)},$ $A^{\prime
}\cong I^{0}=(\overline{I})^{0}$ is a band in $Z(E)^{\prime}.$ Then
$A^{\prime\prime}\cong Z(E)^{\prime\prime}/I^{00}=(I^{00})^{d},$ since
$I^{00}$ is a band in $Z(E)^{\prime\prime}.$ It remains to check that the
Arens product of $A^{\prime\prime}$ is identical with the product on the
subalgebra $(I^{00})^{d}.$ Let $F,G\in Z(E)^{\prime\prime}$ and $\widehat
{F},\widehat{G}\in A^{\prime\prime}$ such that $\widehat{F}=F|_{I^{0}},$
$\widehat{G}=G|_{I^{0}}.$ Let $\{a_{\alpha}\}$ and $\{b_{\beta}\}$ be nets in
$Z(E)$ that converge to $F$ and $G$ respectively in the $\sigma(Z(E)^{\prime
\prime},Z(E)^{\prime}))$-topology. Let $[a]=a+I$ for each $a\in Z(E).$ Then,
it follows that, $\{[a_{\alpha}]\}$ and $\{[b_{\beta}]\}$ converge to
$\widehat{F}$ and $\widehat{G}$ respectively in the $\sigma(A^{\prime\prime
},A^{\prime})$-topology. Then, for $f\in I^{0},$ we have%
\[
\widehat{F}\cdot\widehat{G}(f)=\underset{\alpha}{\lim}\underset{\beta}{\lim
}f([a_{\alpha}][b_{\beta}])=\underset{\alpha}{\lim}\underset{\beta}{\lim
}f(a_{\alpha}b_{\beta})=F\cdot G(f).
\]

\end{proof}
\end{lemma}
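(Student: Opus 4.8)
The plan is to reduce the statement to the concrete structure of $\widehat{Z(E)} = C(K')$ and then transport the Arens product across the resulting band decomposition by a weak-$*$ approximation argument. \emph{First} I would describe the completion of $A$. Since $I$ is a closed algebra ideal of $Z(E)$, its closure $\overline{I}$ in $\widehat{Z(E)} = C(K')$ is a closed algebra ideal, hence equals $\{a \in C(K') : a|_K = 0\}$ for the hull $K = \{t \in K' : a(t) = 0 \text{ for all } a \in \overline{I}\}$, a closed subset of $K'$; by Tietze extension, restriction induces an isometric algebra and lattice isomorphism $C(K')/\overline{I} \cong C(K)$. Because the completion of a normed space modulo a closed subspace is the quotient of the completions, the completion of $A = Z(E)/I$ is $\widehat{Z(E)}/\overline{I} \cong C(K)$; in particular $A$ is a normed $f$-algebra densely embedded in $C(K)$.

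\emph{Next} I would identify the two duals as complementary bands. Standard duality gives $A' = (Z(E)/I)' = I^0$, and since $\overline{I}$ is an order ideal of the AM-space $\widehat{Z(E)}$ while $I^0 = (\overline{I})^0$, and the annihilator of an order ideal is a band (evaluations on the order dual being order continuous), $A' \cong I^0$ is a band in $Z(E)' = \widehat{Z(E)}^\sim$. Applying the same fact once more, $I^{00} = (I^0)^0$ is a band in the Dedekind complete space $Z(E)''$, giving the band decomposition $Z(E)'' = I^{00} \oplus (I^{00})^d$. Then $A'' = (A')' = (I^0)' \cong Z(E)''/I^{00}$, and the complementary band $(I^{00})^d$ maps isomorphically onto this quotient, which yields the Riesz- and Banach-space identification $A'' \cong (I^{00})^d$. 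Since $Z(E)'' \cong C(S)$ carries pointwise multiplication and bands in $C(S)$ are ring ideals, $(I^{00})^d$ is automatically a subalgebra, so everything comes down to checking that the above identification intertwines the Arens product of $A''$ with the product induced on $(I^{00})^d$.

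This last step is the heart of the matter and the point I expect to cause the most trouble, because the Arens product is only separately weak-$*$ continuous and cannot simply be ``pushed through'' the quotient. I would handle it exactly by approximation from the algebra, as in Lemma \ref{L1}. Let $F, G \in Z(E)''$ restrict to $\widehat{F}, \widehat{G} \in A'' = (I^0)'$; by Goldstine's theorem choose nets $\{a_\alpha\}, \{b_\beta\}$ in $Z(E)$ converging to $F, G$ in $\sigma(Z(E)'', Z(E)')$. Testing against any $f \in A' = I^0$ gives $f([a_\alpha]) = f(a_\alpha)$, so the quotient classes $\{[a_\alpha]\}$ and $\{[b_\beta]\}$ converge to $\widehat{F}, \widehat{G}$ in $\sigma(A'', A')$. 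The three-step definition of the Arens product then yields the iterated-limit formula $\widehat{F} \cdot \widehat{G}(f) = \lim_\alpha \lim_\beta f([a_\alpha][b_\beta])$ for $f \in I^0$; since the quotient map is an algebra homomorphism and $f$ annihilates $I$, this equals $\lim_\alpha \lim_\beta f(a_\alpha b_\beta) = F \cdot G(f)$. Hence $\widehat{F} \cdot \widehat{G} = (F \cdot G)|_{I^0}$, which is precisely the product carried over from $(I^{00})^d$, completing the identification.
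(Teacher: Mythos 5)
Your proposal is correct and follows essentially the same route as the paper's own proof: completing $A$ to $C(K)$ via the hull of $\overline{I}$ in $C(K')$, identifying $A'\cong I^0$ and $A''\cong Z(E)''/I^{00}\cong(I^{00})^d$ as bands, and verifying the Arens product by the same iterated-limit computation with weak-$*$ approximating nets $\{a_\alpha\},\{b_\beta\}$ and their quotient classes. The extra justifications you supply (Tietze, Goldstine, bands in $C(S)$ being ring ideals) merely make explicit what the paper leaves implicit.
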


Let $J$ be an ideal in $E^{\sim}.$ For any $F\in Z(E)^{\prime\prime},$
$m(F)|_{J}\in Z(J).$ Each operator in $Z(J)$ has a unique extension to an
operator in the ideal center of the band generated by $J.$ Since bands in
$E^{\sim}$ are projection bands, we have $Z(J)=Z(E^{\sim})|_{J}.$

Suppose $J$ is an ideal in $E^{\sim}$ that separates the points of $E.$ Let
$A$ be a unital subalgebra of $Z(E).$

\begin{definition}
\label{df2} $E$ is called cyclic with respect to $A$ for the dual pair
$\langle E,J\rangle$ if there is $u\in E_{+}$ such that $(Au)^{0}=\{0\}$ in
$J.$

\begin{lemma}
\label{L3} Let $E$ be a Riesz space and $J$ be an ideal in $E^{\sim}$ that
separates the points of $E.$ Suppose $E$ is cyclic with respect to a unital
$f$-subalgebra $A$ of $Z(E)$ for the dual pair $\langle E,J\rangle.$ Then
$m(A^{\prime\prime})|_{J}=Z(J).$
\end{lemma}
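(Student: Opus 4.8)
The plan is to imitate the proof of Proposition \ref{p4}. I will produce, for each $S\in Z(J)$ with $0\le S\le I$, an element $F\in A''$ with $0\le F\le 1$ and $m(F)|_{J}=S$, thereby establishing the nontrivial inclusion $Z(J)\subseteq m(A'')|_{J}$; the reverse inclusion is immediate from the remark preceding the lemma, since $A''=A^{00}\subseteq Z(E)''$ by Lemma \ref{L1}, so that $m(A'')|_{J}\subseteq Z(J)$ automatically. The first move is to fix such an $S$ and seek a net $\{T_{\alpha}\}$ in $A$ with $0\le T_{\alpha}\le 1$ such that $f(T_{\alpha}x)\to (Sf)(x)$ for every $f\in J$ and $x\in E$; equivalently, $T_{\alpha}'|_{J}\to S$ in the weak operator topology determined by the pairing $\langle J,E\rangle$. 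Once this net is in hand, the argument closes exactly as in Proposition \ref{p4}: regarding $\{T_{\alpha}\}\subseteq A\subseteq A''=A^{00}$, the order interval $[0,1]$ of $A''$ is $\sigma(A'',A')$-compact by the Alaoglu theorem, so a subnet $\{T_{\beta}\}$ clusters at some $F\in A''$ with $0\le F\le 1$, and then for each $x\in E$, $f\in J$ one gets $m(F)(f)(x)=F(\mu_{x,f}|_{A})=\lim_{\beta}\mu_{x,f}(T_{\beta})=\lim_{\beta}f(T_{\beta}x)=(Sf)(x)$, i.e. $m(F)|_{J}=S$.

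Thus the entire weight of the lemma falls on producing the net, that is, on the density statement: the convex set $\{T'|_{J}:T\in A,\ 0\le T\le 1\}$ is weak-operator dense in $\{S\in Z(J):0\le S\le I\}$. By convexity and the Hahn--Banach separation theorem, and since the weak-operator continuous functionals are the maps $R\mapsto\sum_{i}(Rf_{i})(x_{i})$, this reduces to the inequality
\[
\sum_{i}(Sf_{i})(x_{i})\ \le\ \sup\Bigl\{\textstyle\sum_{i}f_{i}(Tx_{i}):T\in A,\ 0\le T\le 1\Bigr\}
\]
for every finite family $(f_{i},x_{i})\in J\times E$. Writing $\lambda=\sum_{i}\mu_{x_{i},f_{i}}|_{A}\in A^{\sim}$, the right-hand side is precisely $\lambda^{+}(1)$, by the general identity $\sup_{0\le T\le 1}\lambda(T)=\lambda^{+}(1)$ valid in the unital Riesz space $A$; and the left-hand side equals $\lambda_{S}(1)$, where $\lambda_{S}=\sum_{i}\mu_{x_{i},Sf_{i}}|_{A}\in A^{\sim}$. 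So the problem is to prove $\lambda_{S}(1)\le\lambda^{+}(1)$.

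To attack this I would extend $S$ to $\widehat{S}\in Z(E^{\sim})$ with $0\le\widehat{S}\le I$ (using $Z(J)=Z(E^{\sim})|_{J}$, as noted before the lemma), approximate $\widehat{S}$ from below by finite sums $\sum_{j}c_{j}Q_{j}$ of scalar multiples $c_{j}\in[0,1]$ of pairwise disjoint band projections $Q_{j}$ of $E^{\sim}$ (Freudenthal's theorem in $Z(E^{\sim})$), and then approximate each $Q_{j}$, in the weak sense dictated by the finite data $\{(f_{i},x_{i})\}$, by an element of the interval $[0,1]$ of $A$. Choosing these approximants disjoint, which is where $A$ being a \emph{sublattice} is used, keeps their sum $\le 1$ and yields a single $T\in[0,1]\cap A$ with $\sum_{i}f_{i}(Tx_{i})$ close to $\lambda_{S}(1)$, giving $\lambda^{+}(1)\ge\lambda_{S}(1)$.

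The main obstacle is exactly this last approximation of band projections of $E^{\sim}$ by elements of $[0,1]\cap A$, and this is precisely where cyclicity is indispensable. The hypothesis $(Au)^{0}=\{0\}$ in $J$ says that $Au$ is $\sigma(E,J)$-dense in $E$; combined with $T'|_{J}\in Z(J)$ (so that each $T\in A$ is $\sigma(E,J)$-continuous, whence $Tu=0$ forces $T=0$), this makes $u$ a faithful cyclic vector, so that the band structure of the cyclic space, and hence the projections $Q_{j}$, becomes reachable in the weak operator topology by the action of the interval $[0,1]$ of $A$. Carrying out this reachability rigorously, that is, justifying the displayed inequality for all finite families, is the crux; the remaining steps are the formal Alaoglu limiting argument borrowed from Proposition \ref{p4}.
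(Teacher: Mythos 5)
Your scaffolding is sound but it does not contain a proof. The reductions you carry out --- restricting to $0\le S\le I$, invoking Alaoglu on the interval $[0,1]$ of $A''$, and using Hahn--Banach separation to reduce weak-operator density of $\{T'|_{J}:T\in A,\ 0\le T\le 1\}$ to the inequality $\lambda_{S}(1)\le\lambda^{+}(1)$ for all finite families --- are all correct and standard, but they merely \emph{reformulate} the lemma: granted the soft parts, that inequality is equivalent to the statement $Z(J)\subseteq m(A'')|_{J}$ being proved. The entire content is then deferred to the ``reachability'' of band projections $Q_{j}$ of $E^{\sim}$ by $[0,1]\cap A$, and for this you offer no argument beyond the assertion that cyclicity makes it so; but reachability of the $Q_{j}$ is precisely the special case of the lemma for components of the identity, so the plan is circular as it stands. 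Moreover the disjointification step is a genuine obstruction, not a detail: weak-operator approximation does not respect disjointness, and you give no device for producing \emph{pairwise disjoint} elements of $[0,1]\cap A$ simultaneously close (against the finite data $\{(f_{i},x_{i})\}$) to the pairwise disjoint $Q_{j}$; without disjointness the bound $\sum_{j}c_{j}T_{j}\le 1$ fails and the Freudenthal reduction collapses.

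The paper's proof takes a different and non-approximative route that you should compare with. It never proves a density statement in $Z(J)$ at all; it works exactly, inside $A''\cong C(S)$ ($S$ hyperstonian), and cyclicity is used through the concrete mechanism that $g\mapsto\widehat{\mu}_{u,g}$ is injective on $J$ (if $\widehat{\mu}_{u,g}=0$ then $g$ annihilates $Au$, hence $g=0$). For $T\in Z(E^{\sim})$ with $0\le T\le 1$ and $f\in J_{+}$ one has $0\le\widehat{\mu}_{u,Tf}\le\widehat{\mu}_{u,f}$, so the Radon--Nikodym theorem produces $a_{f}\in L^{\infty}(\widehat{\mu}_{u,f})\subset C(S)=A''$ with $\widehat{\mu}_{u,Tf}=a_{f}\cdot\widehat{\mu}_{u,f}=\widehat{\mu}_{u,a_{f}\circ f}$, and injectivity converts this measure identity into the operator identity $Tf=a_{f}\circ f$. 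The remaining work is a patching argument: with $P_{f}$ the band projection onto the band of $\widehat{\mu}_{u,f}$ in $C(K)'$, one shows $a\circ f=0$ iff $P_{f}\cdot a=0$, deduces the compatibility relations $a_{g}\cdot P_{f}=a_{f}\cdot P_{g}$, and uses that $S$ is Stonian to glue the local densities $a_{f}$ into a single $a\in A''$ with $m(a)|_{J}=T$. So where you attempt to approximate band projections of $E^{\sim}$ from $A$ (the step you correctly identify as the crux and leave unproved), the paper instead localizes at each $f\in J_{+}$, extracts an exact Radon--Nikodym derivative in $A''$, and globalizes via the hyperstonian structure. To complete your proposal you would essentially have to reproduce this machinery, at which point the Hahn--Banach/Alaoglu layer becomes superfluous; as submitted, the proof has a genuine gap at its central step.
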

\end{definition}

\begin{proof}
The norm completion $\widehat{A}$ of $A$ is a unital closed subalgebra of
$\widehat{Z(E)}.$ Therefore $\widehat{A}=C(K)$ for some compact Hausdorff
space $K$. Furthermore $A^{\sim}=A^{\prime}=C(K)^{\prime}$ and $A^{\prime
\prime}=C(K)^{\prime\prime}=C(S)$ for some hyperstonian space $S.$ (We again
mention that the usual multiplication on $C(S)$ is the Arens extension of the
product on $C(K)$ as shown in \cite{Ar}. Also the usual $C(S)$ -module
structure of $C(K)^{\prime}$ via its ideal center is the Arens homomorphism of
the bidual of $C(K)$ onto the ideal center of $C(K)^{\prime}$ (e.g.,
Proposition \ref{p4}).) In the rest of the proof, by Lemma \ref{L1}, we
consider $A^{\prime\prime}$ as a subalgebra of $Z(E)^{\prime\prime}.$

Let $u\in E_{+}$ be a cyclic vector and $f\in J_{+}.$ Then $\mu_{u,f}\in
Z(E)_{+}^{\prime}$ and $\mu_{u,f}|_{A}=\widehat{\mu}_{u,f}\in C(K)_{+}%
^{\prime}.$ Let $P_{f}$ be the band projection of $C(K)^{\prime}$ onto the
band $B(\widehat{\mu}_{u,f})$ generated by $\widehat{\mu}_{u,f}.$ By the
Lebesgue Decomposition Theorem and the Radon-Nikodym Theorem, we have%
\[
B(\widehat{\mu}_{u,f})=P_{f}\cdot C(K)^{\prime}=L^{1}(\widehat{\mu}%
_{u,f})=\{\mu\in C(K)^{\prime}:|\mu|<<\widehat{\mu}_{u,f}\}.
\]
The first equality above follows by Proposition \ref{p4}.

Suppose $e\circ f=0$ for some idempotent $e\in A^{\prime\prime}=C(S).$ Let
$\{a_{\alpha}\}$ be a net in $A$ that converges to $e$ in the $\sigma
(A^{\prime\prime},A^{\prime})$-topology. Then for each $a\in A,$%
\begin{align*}
e\cdot\widehat{\mu}_{u,f}(a)  &  =e(\widehat{\mu}_{u,f}\cdot a)=\underset
{\alpha}{\lim}\widehat{\mu}_{u,f}\cdot a(a_{\alpha})=\underset{\alpha}{\lim
}\widehat{\mu}_{u,f}(aa_{\alpha})=\underset{\alpha}{\lim}\mu_{u,f}(a_{\alpha
}a)\\
&  =\underset{\alpha}{\lim}f(a_{\alpha}au)=\underset{\alpha}{\lim}\mu
_{au,f}(a_{\alpha})=e(\mu_{au,f})=e\circ f(au)=\widehat{\mu}_{u,e\circ
f}(a)=0.
\end{align*}

Hence $0\leq e\leq1-P_{f}$. Conversely, since $u$ is a cyclic vector,
$\widehat{\mu}_{u,(1-P_{f})\circ f}=(1-P_{f})\cdot\widehat{\mu}_{u,f}=0$
implies that $(1-P_{f})\circ f=0.$ Therefore $1-P_{f}=\sup\{e\in C(S):e\circ
f=0$ and $e=e^{2}\}.$ That is, $a\circ f=0$ for some $a\in A^{\prime\prime}$
if and only if $P_{f}\cdot a=0.$

Let $T\in Z(E^{\sim})$ with $0\leq T\leq1.$ Let $f\in J_{+}.$ Then
$0\leq\widehat{\mu}_{u,Tf}\leq\widehat{\mu}_{u,f}$ in $A^{\prime}.$ Therefore,
by the Radon-Nikodym Theorem, there is $a_{f}\in L^{\infty}(\widehat{\mu
}_{u,f})\subset C(S)=A^{\prime\prime}$ such that $\widehat{\mu}_{u,Tf}%
=a_{f}\cdot\widehat{\mu}_{u,f}=\widehat{\mu}_{u,a_{f}\circ f}$. Since $u$ is a
cyclic vector it follows that $Tf=a_{f}\circ f.$ Suppose $g\in J$ such that
$0\leq g\leq f.$ $E^{\sim}$ is Dedekind complete, there is $G\in Z(E^{\sim})$
with $0\leq G\leq1$ such that $g=Gf.$ Therefore%
\[
m(a_{g})(g)=Tg=T(Gf)=G(Tf)=Gm(a_{f})(f)=m(a_{f})(Gf)=m(a_{f})(g).
\]
That is $(a_{g}-a_{f})\circ g=0.$ Therefore $(a_{g}-a_{f})\cdot P_{g}=0.$ Now
suppose $f,g\in J_{+}$ and $h=f\vee g.$ Then $(a_{f}-a_{h})\cdot P_{f}%
=(a_{g}-a_{h})\cdot P_{g}=0.$ Hence $a_{g}\cdot P_{f}=a_{f}\cdot P_{g}$ for
all $f,g\in J_{+}.$ Since $S$ is Stonian and $0\leq a_{f}\leq P_{f}\leq1$ for
all $f\in J_{+},$ there is a unique $a\in A^{\prime\prime}$ such that
$P_{f}\cdot a=a_{f}$ and $(1-\sup\{P_{f}:f\in J_{+}\})\cdot a=0.$ Then%
\[
Tf=a_{f}\circ f=(a\cdot P_{f})\circ f=a\circ(P_{f}\circ f)=a\circ f
\]
for all $f\in J_{+}.$
\end{proof}

Now we are ready to complete the proof of Proposition \ref{p5}.

\begin{proof}
(Necessity) Suppose $x\in E_{+}.$ Let $I(x)$ denote the ideal generated by $x$
in $E$ and let $Ann(x)=\{T\in Z(E):Tx=0\}.$ $Ann(x)$ is a closed order and
algebra ideal in $Z(E).$ Consider the map from $Z(E)$ into $Z(I(x))$ defined
by $T\rightarrow T|_{I(x)}.$ Clearly the map is a norm reducing positive
algebra homomorphism. Since the kernel of this map is equal to $Ann(x),$ the
map is also a lattice homomorphism. It induces a norm reducing lattice and
algebra homomorphism of $Z(E)/Ann(x)$ into $Z(I(x))$ where
$T+Ann(x)=[T]\rightarrow T|_{I(x)}.$ The induced map is an isometry. For
example, if $T\in Z(E)_{+}$ with norm $||T|_{I(x)}||$ in $Z(I(x)),$ then
$(T\wedge||T|_{I(x)}||1)|_{I(x)}=T|_{I(x)}$ and $||[T]||\leq||T|_{I(x)}||.$
Hence let $A=Z(E)|_{I(x)}$ be the normed unital $f$-subalgebra of $Z(I(x)).$
Then since $A\cong Z(E)/Ann(x)$ (isometric, lattice and algebra homomorphism),
we may think of $A^{\prime\prime}$ as a subalgebra of $Z(E)^{\prime\prime}$
(c.f., Lemma \ref{L2}). Equivalently, we may think of $A^{\prime\prime}$ as a
subalgebra of $Z(I(x))^{\prime\prime}$ (c.f., Lemma \ref{L1}).

Let $J=\{f|_{I(x)}:f\in E^{\sim}\}.$ Clearly $J$ is an ideal in $I(x)^{\sim}$
that separates the points of $I(x).$ Let $P_{x}\in Z(E^{\sim})$ denote the
band projection of $E^{\sim}$ onto $(I(x)^{0})^{d}.$ The map $f|_{I(x)}%
\rightarrow P_{x}(f)$ is a lattice homomorphism of $J$ onto the band
$(I(x)^{0})^{d}.$ Let $m_{x}$ be the Arens homomorphism of $Z(I(x))^{\prime
\prime}$ into $Z(I(x)^{\sim})$ and let $m$ be the same for $Z(E)^{\prime
\prime}$ into $Z(E^{\sim}).$ We claim that on elements of $A^{\prime\prime},$
$m_{x}$ restricted to $J$ agrees with $m$ restricted to $(I(x)^{0})^{d}.$
Namely, let $F\in A^{\prime\prime},$ $f\in E^{\sim}$ and $y\in I(x).$ Choose a
net $\{a_{\alpha}\}$ in $A$ that converges to $F$ in the $\sigma
(A^{\prime\prime},A^{\prime})$-topology. Then%
\begin{align*}
m_{x}(F)(f|_{I(x)})(y)  &  =F\circ f|_{I(x)}(y)=F(\mu_{y,f|_{I(x)}}%
)=F([\mu_{y,f|_{I(x)}}])\\
&  =\underset{\alpha}{\lim}[\mu_{y,f|_{I(x)}}](a_{\alpha})=\underset{\alpha
}{\lim}\mu_{y,f|_{I(x)}}(a_{\alpha})=\underset{\alpha}{\lim}f(a_{\alpha}y)
\end{align*}
where $[\mu_{y,f|_{I(x)}}]=\mu_{y,f|_{I(x)}}+A^{0}\in A^{\prime}%
=Z(I(x))^{\prime}/A^{0}.$ On the other hand
\begin{align*}
m(F)(P_{x}(f))(y)  &  =P_{x}(m(F)(f))(y)=m(F)(f)(y)=F\circ f(y)\\
&  =F(\mu_{y,f})=\underset{\alpha}{\lim}\mu_{y,f}(a_{\alpha})=\underset
{\alpha}{\lim}f(a_{\alpha}y).
\end{align*}
In the last string of equalities, the first equality follows because
$Z(E^{\sim})$ is commutative. The second equality follows because the range of
$1-P_{x}$ is $I(x)^{0}$ in $E^{\sim}.$ Finally the fifth equality follows
because $\mu_{y,f}\in Ann(x)^{0}=A^{\prime}$ in $Z(E)^{\prime}.$ Hence, the
claim is verified. In what follows we will keep the notation that we
established in this initial part of the proof.

Suppose that $E$ has topologically full center. This means that $I(x)$ is
cyclic with respect to the unital $f$-subalgebra $A$ of $Z(I(x))$ for the
duality $\langle I(x),J\rangle.$ Given $T\in Z(E^{\sim})$ with $0\leq T\leq1,$
let $\widehat{T}=T|_{(I(x)^{0})^{d}}\in Z(J)$ where $\widehat{T}%
(f|_{I(x)})(y)=T(P_{x}(f))(y)$ for each $f\in E^{\sim}$ and $y\in I(x).$ Then
by Lemma \ref{L3} there is $a_{x}\in A^{\prime\prime}$ with $0\leq a_{x}\leq1$
such that $m_{x}(a_{x})=\widehat{T}$ on $J.$ Then we have that%
\[
m(a_{x})(P_{x}(f))(y)=m_{x}(a_{x})(f|_{I(x)})(y)=\widehat{T}(f|_{I(x)}%
)(y)=T(P_{x}(f))(y)
\]
for all $f\in E^{\sim}$ and $y\in I(x).$ Since the range of $1-P_{x}$ is
$I(x)^{0}$ in $E^{\sim},$ we have that%
\[
m(a_{x})(f)(y)=T(f)(y)
\]
for all $f\in E^{\sim}$ and $y\in I(x).$

Let $1-e_{x}=\sup\{e\in Z(E)^{\prime\prime}:e=e^{2},$ $m(e)(f)(x)=0$ for all
$f\in E^{\sim}\}.$ Since $m$ is order continuous, we have $m(1-e_{x})(f)(x)=0$
for all $f\in E^{\sim}.$ Hence $F\circ f(x)=0$ for all $f\in E^{\sim}$ for
some $F\in Z(E)^{\prime\prime}$ if and only if $e_{x}\cdot F=0$. ( Note that
$\{\mu_{x,f}:f\in E^{\sim}\}$ is an ideal in $Z(E)^{\prime}$ and $1-e_{x}$ is
the band projection of $Z(E)^{\prime\prime}$ onto the band in $Z(E)^{\prime
\prime}$ that annihilates this ideal.) Now repeating the argument in the proof
of Lemma \ref{L3}, we find a unique $a\in Z(E)^{\prime\prime}$ such that
$0\leq a\leq1$, and $e_{x}\cdot a=a_{x}$ for each $x\in E_{+}.$ Then, for each
$f\in E^{\sim}$ and $x\in E_{+},$ we have
\[
T(f)(x)=m(a_{x})(f)(x)=m(a\cdot e_{x})(f)(x)=m(e_{x})m(a)(f)(x)=m(a)(f)(x).
\]
Therefore $m(a)=T.$
\end{proof}

We will conclude this paper by stating some immediate consequences of
Proposition \ref{p5}.

\begin{corollary}
\label{c3} Let $A$ be an $f$-algebra with point separating order dual such
that $(A^{\sim})_{n}^{\sim}$ has a unit. Then $A$ has topologically full center.
\end{corollary}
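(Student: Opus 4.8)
The plan is to reduce the claim, via Proposition \ref{p5}, to the surjectivity of the Arens homomorphism $m:Z(A)^{\prime\prime}\rightarrow Z(A^{\sim})$, and then to extract that surjectivity from the hypothesis on $(A^{\sim})_{n}^{\sim}$ by chaining Corollary \ref{c2} with Propositions \ref{p1} and \ref{p2}. Throughout I would regard the $f$-algebra $A$ as the underlying Riesz space playing the role of $E$ in the earlier results; this is legitimate since $A$ has point separating order dual, hence is Archimedean.

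First I would translate the hypothesis into a statement about $\nu$. As recorded in the introduction (Theorem 5.2 in \cite{HdP1}), the homomorphism $\nu:(A^{\sim})_{n}^{\sim}\rightarrow Orth(A^{\sim})$ is onto $Orth(A^{\sim})$ precisely when $(A^{\sim})_{n}^{\sim}$ has a unit element. Thus the assumption that $(A^{\sim})_{n}^{\sim}$ has a unit gives that $\nu$ is surjective, i.e. $\nu\big((A^{\sim})_{n}^{\sim}\big)=Orth(A^{\sim})$.

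Next I would upgrade this to surjectivity of $\gamma$. By Proposition \ref{p2} the range of $\nu$ is contained in the range of the Arens homomorphism $\gamma:((Orth(A))^{\sim})_{n}^{\sim}\rightarrow Orth(A^{\sim})$, while Proposition \ref{p1} gives the reverse inclusion $\gamma\big(((Orth(A))^{\sim})_{n}^{\sim}\big)\subseteq Orth(A^{\sim})$. Combining these with the surjectivity of $\nu$ yields
\[
\gamma\big(((Orth(A))^{\sim})_{n}^{\sim}\big)=Orth(A^{\sim}).
\]
In particular the range of $\gamma$ is all of $Orth(A^{\sim})$, which is trivially an order ideal in itself.

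Finally I would feed this into the equivalences already proved. Since $\gamma\big(((Orth(A))^{\sim})_{n}^{\sim}\big)$ is an order ideal in $Orth(A^{\sim})$, Corollary \ref{c2} (applied with $E=A$) gives $m(Z(A)^{\prime\prime})=Z(A^{\sim})$, i.e. $m$ is surjective. Then Proposition \ref{p5} (again with $E=A$) immediately yields that $A$ has topologically full center, as asserted. I do not expect a genuine obstacle: the argument is essentially a bookkeeping chain through the established results, and the only point requiring a moment's care is verifying that the map called $\gamma$ in Proposition \ref{p2} is exactly the Arens homomorphism of the bidual of $Orth(A)$ for the Riesz space $E=A$, so that Corollary \ref{c2} applies verbatim.
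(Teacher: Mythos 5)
Your proposal is correct and follows exactly the paper's own route: Theorem 5.2 of \cite{HdP1} gives surjectivity of $\nu$, Proposition \ref{p2} (together with Proposition \ref{p1}) upgrades this to surjectivity of $\gamma$, Corollary \ref{c2} then yields that $m$ is onto $Z(A^{\sim})$, and Proposition \ref{p5} concludes. The only difference is cosmetic: you spell out the reverse inclusion from Proposition \ref{p1} and note explicitly that $Orth(A^{\sim})$ is trivially an order ideal in itself, details the paper leaves implicit.
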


\begin{proof}
By Theorem 5.2 \cite{HdP1}, the homomorphism $\upsilon$ of $(A^{\sim}%
)_{n}^{\sim}$ is onto $Orth(A^{\sim}).$ Then Proposition \ref{p2} implies that
the Arens homomorphism $\gamma:(Orth(A)^{\sim})_{n}^{\sim}\rightarrow
Orth(A^{\sim})$ is also onto. Hence, by Corollary \ref{c2}, $m$ is onto
$Z(A^{\sim}).$ Therefore, by Proposition \ref{p5}, $A$ has topologically full center.
\end{proof}

\begin{remark}
Characterizations of $f$-algebras $A$ such that $(A^{\sim})_{n}^{\sim}$ has
unit are given in \cite{HdP1},\cite{BJ},\cite{J}. Related to Corollary
\ref{c3} , we mention that we do not know any examples of semi-prime
$f$-algebras that do not have topologically full center.
\end{remark}

\begin{corollary}
\label{c4} Let $E$ be a Riesz space with topologically full center. Suppose
$T$ is an order bounded operator on $E.$ Then $T$ commutes with $Z(E)$ if and
only if $T$ is in $Orth(E).$
\end{corollary}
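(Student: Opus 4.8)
The forward implication is the routine one, so I would dispose of it first: $Orth(E)$ is an Archimedean $f$-algebra, hence commutative, and $Z(E)$ is a subalgebra of it; thus every $\pi\in Orth(E)$ commutes with every element of $Z(E)$. All the content lies in the converse, and the plan is to transport the commutation relation to the order dual, where Dedekind completeness is available, prove the statement there, and then pull the conclusion back to $E$ through the canonical embedding into the bidual.

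So suppose $T\in L_{b}(E)$ commutes with $Z(E)$. Taking order adjoints gives $S^{\prime}T^{\prime}=T^{\prime}S^{\prime}$ for every $S\in Z(E)$; that is, $T^{\prime}$ commutes with the natural copy of $Z(E)$ inside $Z(E^{\sim})$. The first real step is to upgrade this to: $T^{\prime}$ commutes with all of $Z(E^{\sim})$. Since $E$ has topologically full center, Proposition \ref{p5} gives that $m:Z(E)^{\prime\prime}\rightarrow Z(E^{\sim})$ is surjective, so every element of $Z(E^{\sim})$ has the form $m(F)$ with $F\in Z(E)^{\prime\prime}$. By the net description in the proof of Proposition \ref{p3}, pick $T_{\alpha}\in Z(E)$ with $T_{\alpha}\rightarrow F$ in $\sigma(Z(E)^{\prime\prime},Z(E)^{\prime})$; then for each $f\in E^{\sim}$ the functional $m(F)(f)$ is the $\sigma(E^{\sim},E)$-limit of $T_{\alpha}^{\prime}f$. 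As an order adjoint, $T^{\prime}$ is $\sigma(E^{\sim},E)$-continuous, and as $T^{\prime}$ commutes with each $T_{\alpha}^{\prime}$ I may interchange $T^{\prime}$ with the limit:
\[
T^{\prime}(m(F)f)=\lim_{\alpha}T^{\prime}(T_{\alpha}^{\prime}f)=\lim_{\alpha}T_{\alpha}^{\prime}(T^{\prime}f)=m(F)(T^{\prime}f),
\]
all limits taken in $\sigma(E^{\sim},E)$. Hence $T^{\prime}$ commutes with every $m(F)$, i.e.\ with all of $Z(E^{\sim})$.

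Next I would use that $E^{\sim}$ is Dedekind complete. There every band is a projection band and every band projection lies in the center, so $Z(E^{\sim})$ contains all band projections of $E^{\sim}$. If $P$ is the projection onto a band $B$ and $\xi\in B$, then $P(T^{\prime}\xi)=T^{\prime}(P\xi)=T^{\prime}\xi$, whence $T^{\prime}\xi\in B$; thus $T^{\prime}$ leaves every band of $E^{\sim}$ invariant. Being order bounded and band preserving, $T^{\prime}\in Orth(E^{\sim})$ by definition of orthomorphism. To descend to $E$, note that the embedding $\pi\rightarrow\pi^{\prime}$ carries orthomorphisms to orthomorphisms (as in Proposition \ref{p1}), so $T^{\prime\prime}=(T^{\prime})^{\prime}\in Orth(E^{\sim\sim})$. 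The canonical map $J:E\rightarrow E^{\sim\sim}$ is injective since $E^{\sim}$ separates points, and it is a Riesz homomorphism, hence a Riesz isomorphism onto its range that reflects disjointness; moreover $T^{\prime\prime}\circ J=J\circ T$. Given $x\perp y$ in $E$ we get $Jx\perp Jy$, so $T^{\prime\prime}(Jx)\perp Jy$ because $T^{\prime\prime}$ is band preserving, i.e.\ $J(Tx)\perp Jy$, and reflecting through $J$ yields $Tx\perp y$. Thus $T$ is order bounded and preserves disjointness, so $T\in Orth(E)$.

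The main obstacle is the middle step, the weak$^{\ast}$-limit interchange: I must be sure that $m(F)(f)$ really is presented as a $\sigma(E^{\sim},E)$-limit of the $T_{\alpha}^{\prime}f$ (which is precisely what the proof of Proposition \ref{p3} supplies) and that $T^{\prime}$, being an adjoint, is continuous for exactly this topology. The surjectivity of $m$ granted by topological fullness is what makes the relevant $F$ available in the first place, and everything after that is the standard interplay between band projections and orthomorphisms in the Dedekind complete space $E^{\sim}$, transferred back to $E$ by the bidual embedding.
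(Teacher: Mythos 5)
Your proof is correct, and its skeleton coincides with the paper's: pass to the order adjoint, use surjectivity of $m$ (Proposition \ref{p5}) to upgrade commutation with $Z(E)$ to commutation with all of $Z(E^{\sim})$, then use Dedekind completeness of $E^{\sim}$ to see that $T^{\prime}$ commutes with all band projections and is therefore band preserving, so $T^{\prime}\in Orth(E^{\sim})$. You diverge from the paper in two places, both to your credit on self-containedness. First, the paper simply asserts that commutation with $Z(E)$ implies $T^{\prime}$ commutes with $m(Z(E)^{\prime\prime})$; you supply the justification, namely that $m(F)f$ is the $\sigma(E^{\sim},E)$-limit of $T_{\alpha}^{\prime}f$ and that $T^{\prime}$, being an adjoint, is continuous for exactly that topology, which legitimizes the limit interchange. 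Second, for the final descent from $T^{\prime}\in Orth(E^{\sim})$ to $T\in Orth(E)$, the paper cites Wickstead \cite[Theorem 3.3]{W5}, whereas you argue directly through the bidual: $T^{\prime\prime}\in Orth(E^{\sim\sim})$, the canonical embedding $J:E\rightarrow E^{\sim\sim}$ is an injective Riesz homomorphism (valid under the paper's standing assumption that $E^{\sim}$ separates points) which reflects disjointness, and $T^{\prime\prime}\circ J=J\circ T$ then forces $x\perp y\Rightarrow Tx\perp y$, i.e.\ $T$ is order bounded and band preserving. One caveat worth noting: your descent is not entirely citation-free, since the step $T^{\prime}\in Orth(E^{\sim})\Rightarrow T^{\prime\prime}\in Orth(E^{\sim\sim})$ invokes the forward half of that same duality (adjoints of orthomorphisms are orthomorphisms, applied with $E^{\sim}$ in place of $E$); but that half is already used in the proof of Proposition \ref{p1}, so it lies within the paper's toolkit, and your argument replaces only the converse half --- the genuinely harder direction --- by an elementary disjointness-reflection argument.
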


\begin{proof}
$T$ is order bounded implies $T^{\prime}:E^{\sim}\rightarrow E^{\sim}.$ If $T$
commutes with $Z(E)$, then $T^{\prime}$ commutes with $m(Z(E)^{\prime\prime
}).$ Since $Z(E)$ is topologically full, $m(Z(E)^{\prime\prime})=Z(E^{\sim}).$
Therefore $T^{\prime}$ commutes with $Z(E^{\sim}).$ That is, $T^{\prime}$
commutes with the band projections on $E^{\sim}.$ Since $E^{\sim}$ is Dedekind
complete, each band in $E^{\sim}$ is a projection band. So $T^{\prime}$ is
band preserving and therefore $T^{\prime}\in Orth(E^{\sim}).$ By a result in
\cite[Theorem 3.3]{W5}, $T\in Orth(E).$
\end{proof}

In view of Examples \ref{ex4} and \ref{ex5}, the corollary may fail if $Z(E)$
is not topologically full. On the other hand, the result may be true even when
$Z(E)$ is not topologically full. The example constructed by Wickstead in
\cite{W2} shows this. We refer the reader to the introduction for more
detailed information. Corollary \ref{c4} shows that $Orth(E)$ is maximal
abelian when $Z(E)$ is topologically full. On the other hand, Wickstead's
example in \cite{W2} shows that the converse is not true.

\begin{corollary}
\label{c5} Let $E$ be a Riesz space with topologically full center. Then the
following are equivalent:\newline(1) $E^{\sim\sim}=(E^{\sim})_{n}^{\sim}%
.$\newline(2) $m$ is continuous when its domain has the $\sigma(Z(E)^{\prime
\prime},Z(E)^{\prime})$-topology and its range has the $\sigma(E^{\sim
},E^{\sim\sim})$-operator topology.
\end{corollary}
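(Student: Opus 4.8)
The plan is to convert condition (2) into a statement about single linear functionals by means of the standard adjoint characterization of weak continuity, and then to read off the needed information from the order continuity of $m$ together with the hyperstonian structure of $Z(E)^{\prime\prime}$. First I would unwind (2). The $\sigma(E^{\sim},E^{\sim\sim})$-operator topology on the range is the topology of pointwise convergence on $E^{\sim}$ tested against the functionals $S\mapsto\Phi(Sf)$ with $f\in E^{\sim}$ and $\Phi\in E^{\sim\sim}$. Since the domain carries $\sigma(Z(E)^{\prime\prime},Z(E)^{\prime})$, whose topological dual is exactly $Z(E)^{\prime}$, the linear map $m$ is continuous for the two indicated topologies if and only if for every $f\in E^{\sim}$ and every $\Phi\in E^{\sim\sim}$ the functional
\[
\phi_{f,\Phi}\colon F\mapsto\Phi(F\circ f)=\Phi(m(F)(f)),\qquad F\in Z(E)^{\prime\prime},
\]
is $\sigma(Z(E)^{\prime\prime},Z(E)^{\prime})$-continuous, that is, lies in $Z(E)^{\prime}$. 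Thus (2) is equivalent to: $\phi_{f,\Phi}\in Z(E)^{\prime}$ for all $f$ and $\Phi$.

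The engine of the argument is the identification of two classes of functionals on $Z(E)^{\prime\prime}$. Because $Z(E)^{\prime\prime}\cong C(S)$ with $S$ hyperstonian, the order continuous functionals on $Z(E)^{\prime\prime}$ are precisely the normal measures, which constitute the predual; hence
\[
(Z(E)^{\prime\prime})_{n}^{\sim}=Z(E)^{\prime}
\]
as subspaces of $Z(E)^{\prime\prime\prime}$, i.e.\ on $Z(E)^{\prime\prime}$ order continuity and $\sigma(Z(E)^{\prime\prime},Z(E)^{\prime})$-continuity coincide. On the other hand the map $F\mapsto m(F)(f)$ from $Z(E)^{\prime\prime}$ into $E^{\sim}$ is order continuous: $m$ is order continuous by Proposition \ref{p3}, and for fixed $f\in E^{\sim}_{+}$ the evaluation $S\mapsto Sf$ on the center $Z(E^{\sim})$ is an order continuous Riesz homomorphism into $E^{\sim}$. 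Consequently $\phi_{f,\Phi}$ is order continuous on $Z(E)^{\prime\prime}$ whenever $\Phi\in(E^{\sim})_{n}^{\sim}$. This immediately yields $(1)\Rightarrow(2)$: if $E^{\sim\sim}=(E^{\sim})_{n}^{\sim}$ then every $\Phi$ is order continuous, so every $\phi_{f,\Phi}$ is order continuous, hence by the displayed identification $\phi_{f,\Phi}\in Z(E)^{\prime}$, which is (2).

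For $(2)\Rightarrow(1)$ I would argue that (2) forces every $\Phi\in E^{\sim\sim}$ to be order continuous. By the same identification, (2) says that each $\phi_{f,\Phi}$ is order continuous on $Z(E)^{\prime\prime}$. Fix $\Phi\in E^{\sim\sim}_{+}$ and a net $g_{\alpha}\downarrow 0$ in $E^{\sim}$; passing to a tail we may assume $0\le g_{\alpha}\le f$ for a fixed $f\in E^{\sim}_{+}$. Since $E^{\sim}$ is Dedekind complete, each such $g_{\alpha}$ lies in the principal ideal generated by $f$, so there is $S_{\alpha}\in Z(E^{\sim})$ with $0\le S_{\alpha}\le 1$ and $g_{\alpha}=S_{\alpha}f$; realizing this ideal as a $C(K)$ with $f\leftrightarrow 1$ shows $g_{\alpha}\downarrow 0$ corresponds to $S_{\alpha}\downarrow 0$ in $Z(E^{\sim})$. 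Now invoke Proposition \ref{p5}: $m$ is surjective and, for the band projection $\pi$ there, $m$ restricts to a Riesz isomorphism of $\pi\cdot Z(E)^{\prime\prime}$ onto $Z(E^{\sim})$ with kernel $(1-\pi)\cdot Z(E)^{\prime\prime}$. Lifting through this isomorphism gives $F_{\alpha}\in\pi\cdot Z(E)^{\prime\prime}$ with $m(F_{\alpha})=S_{\alpha}$ and $F_{\alpha}\downarrow 0$ in $Z(E)^{\prime\prime}$. Then
\[
\Phi(g_{\alpha})=\Phi(S_{\alpha}f)=\Phi(m(F_{\alpha})(f))=\phi_{f,\Phi}(F_{\alpha})\longrightarrow 0,
\]
using order continuity of $\phi_{f,\Phi}$. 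As $\Phi\in E^{\sim\sim}_{+}$ and the net were arbitrary, every $\Phi$ is order continuous, i.e.\ $E^{\sim\sim}=(E^{\sim})_{n}^{\sim}$.

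The routine reductions aside, I expect the main obstacle to be the $(2)\Rightarrow(1)$ lifting step: one must (i) justify the representation $g_{\alpha}=S_{\alpha}f$ with $S_{\alpha}\downarrow 0$ in $Z(E^{\sim})$ from $0\le g_{\alpha}\le f$ and $g_{\alpha}\downarrow 0$, which relies on Dedekind completeness of $E^{\sim}$ and the $C(K)$-functional calculus on the principal ideal of $f$, and (ii) transport this monotone net up to an order-null net $F_{\alpha}\downarrow 0$ in $Z(E)^{\prime\prime}$ through the partial isomorphism supplied by Proposition \ref{p5}. The other delicate point is the structural identity $(Z(E)^{\prime\prime})_{n}^{\sim}=Z(E)^{\prime}$, which is where the hyperstonian nature of $S$ (normal measures $=$ predual) is essential and which drives both implications.
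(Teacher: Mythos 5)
The paper gives no proof of Corollary \ref{c5} (it is explicitly ``left to the interested reader''), so there is no official argument to compare against; judged on its own, your proof is correct and is evidently the intended straightforward route. You rightly reduce (2) to the statement that every functional $\phi_{f,\Phi}\colon F\mapsto\Phi(m(F)(f))$ lies in $Z(E)^{\prime}$, identify $Z(E)^{\prime}$ with the order continuous functionals on the hyperstonian $Z(E)^{\prime\prime}\cong C(S)$ (normal measures $=$ predual), deduce $(1)\Rightarrow(2)$ from the order continuity of $m$ (Proposition \ref{p3}) combined with that of the evaluation $S\mapsto Sf$, and invoke Proposition \ref{p5} --- the only place topological fullness enters --- to lift a net $S_{\alpha}\downarrow 0$ in $Z(E^{\sim})$ to $F_{\alpha}\downarrow 0$ in $\pi\cdot Z(E)^{\prime\prime}$ for $(2)\Rightarrow(1)$. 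The auxiliary facts you lean on are all sound and correctly deployed: $0\le g_{\alpha}\le f$ with $g_{\alpha}\downarrow 0$ does yield $g_{\alpha}=S_{\alpha}f$ with $S_{\alpha}\downarrow 0$ (via $I_{f}\cong C(K)$, the extension $Z(I_{f})=Z(E^{\sim})|_{I_{f}}$ noted in the paper before Definition \ref{df2}, and order continuity of orthomorphisms), and the bijective Riesz homomorphism $\pi\cdot Z(E)^{\prime\prime}\rightarrow Z(E^{\sim})$ induced by $m$ is an order isomorphism, hence preserves the infimum, with $\pi\cdot Z(E)^{\prime\prime}$ a band in $Z(E)^{\prime\prime}$ so that $F_{\alpha}\downarrow 0$ there as well.
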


We leave the straightforward proof of the corollary to the interested reader.

Before stating our final corollary, we want to discuss its content and fix
some notation. Let $K$ be a hyperstonian space. That is $K$ is a Stonian
compact Hausdorff space and $C(K)$ is a dual Banach space. Let $C(K)_{\ast}$
denote the predual of $C(K).$ Recall that $C(K)_{\ast}=C(K)_{n}^{\prime}$, the
order continuous linear functionals on $C(K).$ Hence $C(K)_{\ast}$ is a band
in the Dedekind complete Banach lattice $C(K)^{\prime}.$ Since $Z(C(K)^{\prime
})=C(K)^{\prime\prime},$ there is an idempotent $p\in C(K)^{\prime\prime}$
such that $p$ is the band projection on $C(K)^{\prime}$ with range
$C(K)_{\ast}$. That is
\[
p\cdot C(K)^{\prime}=C(K)_{n}^{\prime}=C(K)_{\ast}.
\]
Let $E$ be a Riesz space. Its order dual $E^{\sim}$ is a Dedekind complete
Riesz space. Therefore $E^{\sim}$ has a topologically full center $Z(E^{\sim
}).$ Furthermore $Z(E^{\sim})$ is itself Dedekind complete as a Banach
lattice. In fact, it is familiar that $Z(E^{\sim})=C(K)$ for some hyperstonian
space $K$.(This will become clear in the proof of the corollary.) Let
$m:Z(E^{\sim})^{\prime\prime}\rightarrow Z(E^{\sim\sim})$ be the Arens
homomorphism of the bidual of $Z(E^{\sim})$. Since $Z(E^{\sim})$ is
topologically full, we have $m(Z(E^{\sim})^{\prime\prime})=Z(E^{\sim\sim})$
and $Ker(m)=(1-\pi)\cdot Z(E^{\sim})^{\prime\prime}$ for some idempotent
$\pi\in Z(E^{\sim})^{\prime\prime}=C(K)^{\prime\prime}$ (Proposition
\ref{p5}). We will show that $p\circ E^{\sim\sim}=(E^{\sim})_{n}^{\sim}$.

\begin{corollary}
\label{c6} Let $E$ be a Riesz space with point separating order dual $E^{\sim
}.$ Let $m$ be the Arens homomorphism of the bidual of $Z(E^{\sim})$ in
$Z(E^{\sim\sim}).$ Then

\begin{enumerate}
\item $Z(E^{\sim})$ is topologically full and $Z(E^{\sim})=C(K)$ for some
hyperstonian space $K.$

\item There is an idempotent $\pi\in C(K)^{\prime\prime}$ such that
\[
Z(E^{\sim\sim})=\pi\cdot C(K)^{\prime\prime}\text{\thinspace\thinspace
and\thinspace}\,Ker(m)=(1-\pi)\cdot C(K)^{\prime\prime}.
\]

\item There is an idempotent $p\in C(K)^{\prime\prime}$ with $p\leq\pi$ such
that
\[
p\cdot C(K)^{\prime}=C(K)_{\ast}=C(K)_{n}^{\prime}\text{ \thinspace and
\thinspace}p\circ E^{\sim\sim}=(E^{\sim})_{n}^{\sim}.
\]

\item $E^{\sim\sim}=(E^{\sim})_{n}^{\sim}$ if and only if $p=\pi.$
\end{enumerate}
\end{corollary}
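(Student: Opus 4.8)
The plan is to derive all four parts by applying the machinery already built for $E$ to the \emph{Dedekind complete} Riesz space $E^{\sim}$ in place of $E$; the only genuinely new content is the identification in part (3) of the projection $p$, and that is where the real work lies.

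\textbf{Parts (1), (2) and (4).} For (1), since $E^{\sim}$ is Dedekind complete every $y\in E_{+}^{\sim}$ generates a projection band, so $Z(E^{\sim})y$ already \emph{equals} the ideal generated by $y$; in particular $Z(E^{\sim})$ is topologically full. As $Z(E^{\sim})$ is an Archimedean unital $f$-algebra and an $AM$-space with unit, the Stone algebra theorem gives $Z(E^{\sim})=C(K)$, and it remains to see that $K$ is hyperstonian, i.e. that $C(K)$ carries a separating family of order continuous functionals. This I would extract from the canonical embedding $E\hookrightarrow(E^{\sim})_{n}^{\sim}$, which separates the points of $E^{\sim}$: for $u\in E_{+}^{\sim}$ and $x\in E$ the functional $T\mapsto\widehat{x}(Tu)$ is order continuous on $Z(E^{\sim})$ (because $T\mapsto Tu$ and $\widehat{x}$ are order continuous), and as $u,x$ vary these separate $Z(E^{\sim})$. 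Part (2) is then immediate from Proposition~\ref{p5} applied to $E^{\sim}$. Part (4) is formal once (3) is known: $m$ restricts to an algebra and order isomorphism of $\pi\cdot C(K)^{\prime\prime}$ onto $Z(E^{\sim\sim})$ sending $\pi$ to the identity, so $E^{\sim\sim}=(E^{\sim})_{n}^{\sim}$ says exactly that $m(p)=\rho$ equals that identity $m(\pi)$, which by injectivity of $m$ on $\pi\cdot C(K)^{\prime\prime}$ together with $p\le\pi$ is equivalent to $p=\pi$.

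\textbf{Part (3), the reduction.} Since $m(p)$ is idempotent in $Z(E^{\sim\sim})$ it is a band projection, so $p\circ E^{\sim\sim}=m(p)E^{\sim\sim}$ is a band in the Dedekind complete space $E^{\sim\sim}$; writing $\rho$ for the band projection onto $(E^{\sim})_{n}^{\sim}$, the assertion $p\circ E^{\sim\sim}=(E^{\sim})_{n}^{\sim}$ is precisely $m(p)=\rho$. The working identity is that, for $\phi\in E^{\sim\sim}$ and $y\in E^{\sim}$, the Arens relations give $(p\circ\phi)(y)=p(\mu_{y,\phi})=(p\cdot\mu_{y,\phi})(1)=(\mu_{y,\phi})_{n}(1)$, where $1$ is the unit of $Z(E^{\sim})$, $p\cdot(\cdot)$ is the band projection of $C(K)^{\prime}$ onto $C(K)_{\ast}$, and $(\cdot)_{n}$ is the order continuous part. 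One inclusion is easy: if $\phi\in(E^{\sim})_{n}^{\sim}$ then $\mu_{y,\phi}$ is order continuous on $Z(E^{\sim})$ (as $f\mapsto fy$ is order continuous and $\phi$ is order continuous), so $(p\circ\phi)(y)=\mu_{y,\phi}(1)=\phi(y)$; hence $\rho\le m(p)$.

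\textbf{The main obstacle.} The hard direction is $m(p)\le\rho$, equivalently that $\phi\perp(E^{\sim})_{n}^{\sim}$ forces $\mu_{y,\phi}$ to be \emph{singular} on $Z(E^{\sim})$ for every $y$. Here I would localize to $I=I(y)=Z(E^{\sim})y$. Because $E^{\sim}$ is Dedekind complete, $S_{y}:Z(I)\to I,\ f\mapsto fy$, is surjective (as $Z(E^{\sim})y=I(y)$) and injective with positive inverse ($y$ is an order unit of $I$), hence a Riesz isomorphism; therefore its adjoint $S_{y}^{\prime}:I^{\sim}\to Z(I)^{\sim}$ is a Riesz isomorphism carrying singular functionals to singular functionals. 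Since $\mu_{y,\phi}=S_{y}^{\prime}(\phi|_{I})$ read on $Z(I)=Z(E^{\sim})|_{I}$, it suffices to see that $\phi|_{I}$ is singular on $I$; this follows because restriction $E^{\sim\sim}\to I^{\sim}$ is a lattice homomorphism and $\{\widehat{x}|_{I}:x\in E\}$ is order dense in $I_{n}^{\sim}$ (a carrier argument: $\bigcap_{x}\{g\in I:|g|(x)=0\}=\{0\}$ since $E$ separates the points of $E^{\sim}$). This yields $m(p)=\rho$. Finally, for $p\le\pi$ I would prove that $\{\mu_{y,\widehat{x}}:y\in E_{+}^{\sim},\,x\in E_{+}\}$ generates $C(K)_{\ast}$ as a band: for fixed $y$ these are $S_{y}^{\prime}(\widehat{x}|_{I})$, hence order dense in the order continuous dual of $Z(E^{\sim})|_{I(y)}$, and as $y$ varies the supports exhaust $K$ because the action of $Z(E^{\sim})$ is faithful. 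Then, putting $e=p\wedge(1-\pi)$, the identity $\mu_{y,\widehat{x}}\cdot f=\mu_{fy,\widehat{x}}$ gives $e\cdot\mu_{y,\widehat{x}}=0$ for all $y,x$, so $e\cdot C(K)_{\ast}=0$; as $e\le p$ this forces $e=0$, i.e. $p\le\pi$. I expect the Riesz-isomorphism step $S_{y}^{\prime}$ and its compatibility with the singular/order continuous splitting to be the crux of the whole argument.
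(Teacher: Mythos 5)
Your proposal is correct, and on parts (1), (2) and (4) it follows the paper's own proof: Dedekind completeness of $E^{\sim}$ gives topological fullness and $Z(E^{\sim})=C(K)$, the functionals $T\mapsto \widehat{x}(Tu)$ (the paper's $\mu_{f,x}$ with $x\in E$) witness that $K$ is hyperstonian, part (2) is Proposition \ref{p5} applied to $E^{\sim}$, and (4) is the same formal computation with $Ker(m)=(1-\pi)\cdot C(K)^{\prime\prime}$. On part (3), though, you take a genuinely different route. The paper proves the module identity $\mu_{f,p\circ x^{\prime\prime}}=p\cdot\mu_{f,x^{\prime\prime}}$ via a $\sigma(C(K)^{\prime\prime},C(K)^{\prime})$-approximating net and then deduces $p\circ E^{\sim\sim}=(E^{\sim})_{n}^{\sim}$ from the equivalence ``$x^{\prime\prime}\in(E^{\sim})_{n}^{\sim}$ iff every $\mu_{f,x^{\prime\prime}}$ is order continuous,'' whose reverse implication is only indicated (``by reversing the process''), and it obtains $p\leq\pi$ by asserting, without detail, the descriptions of $p$ and $\pi$ as suprema of band projections generated by the $\mu_{f,x}$ ($x\in E$) and $\mu_{f,x^{\prime\prime}}$ ($x^{\prime\prime}\in E^{\sim\sim}$). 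You instead recast the claim as the band-projection identity $m(p)=\rho$ and prove the hard inequality by localization: $S_{y}:Z(I(y))\rightarrow I(y)$ is a Riesz isomorphism exactly because $E^{\sim}$ is Dedekind complete (so $Z(E^{\sim})y=I(y)$ and $y$ is an order unit of $I(y)$), its adjoint respects the order-continuous/singular decomposition, and your carrier argument (using that $E$ separates $E^{\sim}$) shows $\{\widehat{x}|_{I(y)}:x\in E\}$ generates $I(y)_{n}^{\sim}$ as a band; the same ingredients give your proof of $p\leq\pi$. What this buys is a complete justification of precisely the two steps the paper leaves as observations — indeed your singularity lemma implies the paper's ``reversal'' (if every $\mu_{f,y}$ is order continuous, split $y=\rho y+(1-\rho)y$ and conclude the singular component vanishes) — at the cost of more machinery; the paper's route is shorter and its identity $\mu_{f,p\circ x^{\prime\prime}}=p\cdot\mu_{f,x^{\prime\prime}}$ has independent use. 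One step you should spell out: passing from ``$S_{y}^{\prime}(\phi|_{I(y)})$ singular on $Z(I(y))$'' to ``$\mu_{y,\phi}$ singular on $Z(E^{\sim})$'' uses that the restriction $Z(E^{\sim})\rightarrow Z(I(y))$ factors through the decomposition $Z(E^{\sim})=Z(B(y))\oplus Z(B(y)^{d})$ with $Z(B(y))\cong Z(I(y))$ by unique extension to the generated band; this is routine, since bands in $E^{\sim}$ are projection bands, but it is where singularity actually pulls back.
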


\begin{proof}
1. Since $E^{\sim}$ is Dedekind complete, $Z(E^{\sim})=C(K)$ is topologically
full and $K$ is a Stonian compact Hausdorff space. (It is well known that $K$
is hyperstonian. We include a proof for the sake of completeness.) To show
that $K$ is hyperstonian, it is sufficient to see that the order continuous
linear functionals on $C(K)$ separate the points of $C(K)$ \cite{S}. Consider
$E\subset(E^{\sim})_{n}^{\sim}\subset E^{\sim\sim}.$ Take positive elements
$x\in E,$ $f\in E^{\sim}$ and $a_{\tau},a\in C(K)$ such that $\{a_{\tau}\}$ is
an increasing net with $\sup a_{\tau}=a$ in $C(K).$ Then, since $E^{\sim}$ is
Dedekind complete, $\sup a_{\tau}f=af$ in $E^{\sim}.$ Therefore $a_{\tau
}f(x)\uparrow af(x),$ since $x$ is an order continuous linear functional on
$E^{\sim}.$ Consider $\mu_{f,x}\in C(K)^{\prime}$ in the definition process of
$m,$ we have $\mu_{f,x}(b)=x(bf)=bf(x)$ for all $b\in C(K).$ Hence it follows
that $\mu_{f,x}\in C(K)_{n}^{\prime}$ for all $x\in E$ and $f\in E^{\sim}.$
Also it is clear that these linear functionals separate the points of the
center $Z(E^{\sim})=C(K).$ Therefore $K$ is hyperstonian and $C(K)_{\ast
}=C(K)_{n}^{\prime}$ is the predual of $C(K).$

2. The existence of $\pi$ is clear from Proposition \ref{p5}. An equivalent
means of defining $\pi\in C(K)^{\prime\prime}$ is by observing that $\pi$ is
the supremum of the band projections on $C(K)^{\prime}$ obtained by
considering the bands generated by each linear functional of the form
$\mu_{f,x^{\prime\prime}}\in C(K)^{\prime}$ when $f\in E^{\sim}$ and
$x^{\prime\prime}\in E^{\sim\sim}.$

3. Let $p\in C(K)^{\prime\prime}=Z(C(K)^{\prime})$ be the band projection onto
the band $C(K)_{n}^{\prime}=C(K)_{\ast}.$ An equivalent means of defining $p$
would be to observe that $p$ is the supremum of the band projections on
$C(K)^{\prime}$ obtained by considering the bands generated by each linear
functional of the form $\mu_{f,x}\in C(K)^{\prime}$ when $f\in E^{\sim}$ and
$x\in E\subset E^{\sim\sim}.$ Hence we have $p\leq\pi.$ It remains to show
that $p\circ E^{\sim\sim}=(E^{\sim})_{n}^{\sim}.$ Note that for each $f\in
E^{\sim}$ and each $x^{\prime\prime}\in E^{\sim\sim},$ we have $\mu_{f,p\circ
x^{\prime\prime}}=p\cdot\mu_{f,x^{\prime\prime}}$.( Namely, let $\{a_{\alpha
}\}$ be a net in $C(K)$ that converges to $p$ in $\sigma(C(K)^{\prime\prime
},C(K)^{\prime})$-topology. Then%
\[
\mu_{f,p\circ x^{\prime\prime}}(a)=p\circ x^{\prime\prime}(af)=p(\mu
_{af,x^{\prime\prime}})=\underset{\alpha}{\lim}\mu_{af,x^{\prime\prime}%
}\left(  a_{\alpha}\right)  =\underset{\alpha}{\lim}x^{\prime\prime}%
(a_{\alpha}af)
\]
and%
\[
p\cdot\mu_{f,x^{\prime\prime}}(a)=p(\mu_{f,x^{\prime\prime}}\cdot
a)=\underset{\alpha}{\lim}\mu_{f,x^{\prime\prime}}\cdot a(a_{\alpha
})=\underset{\alpha}{\lim}\mu_{f,x^{\prime\prime}}(aa_{\alpha})=\underset
{\alpha}{\lim}x^{\prime\prime}(a_{\alpha}af)
\]
for each $a\in C(K)$. Here the second set of displayed equalities follow from
the definition of the Arens product on the bidual of $C(K)$ when $C(K)$ is
considered as a unital $f$-algebra \cite{HdP1}.) But $p\cdot\mu_{f,x^{\prime
\prime}}\in C(K)_{\ast}=C(K)_{n}^{\prime}$ for each $f\in E^{\sim}.$ By
reversing the process we used in part (1), it follows that $p\circ
x^{\prime\prime}\in(E^{\sim})_{n}^{\sim}$. Conversely if $x^{\prime\prime}%
\in(E^{\sim})_{n}^{\sim},$ the process we used in part (1) shows that
$\mu_{f,x^{\prime\prime}}\in C(K)_{\ast}$. Therefore
\[
\mu_{f,x^{\prime\prime}}=p\cdot\mu_{f,x^{\prime\prime}}=\mu_{f,p\circ
x^{\prime\prime}}%
\]
for each $f\in E^{\sim}.$ That is, $p\circ x^{\prime\prime}=x^{\prime\prime}$
for all $x^{\prime\prime}\in(E^{\sim})_{n}^{\sim}.$ So $p\circ E^{\sim\sim
}=(E^{\sim})_{n}^{\sim}.$

Now (4) is clear from parts (2) and (3).
\end{proof}

\begin{remark}
The article \cite{HdP1} has initiated considerable research on Arens product
on the biduals of lattice ordered algebras, we include a partial list
\cite{BH}, \cite{Gr}, \cite{H}, \cite{Sd1}, \cite{Sd2}.
\end{remark}


\begin{thebibliography}{99}                                                                                               %


\bibitem {AB}Aliprantis, C.D. and Burkinsaw, O., \textit{Positive Operators,
}Academic Press, London, 1985

\bibitem {Ar}Arens, R., "Operations induced in function classes",
\textit{Monatsh. Math., }\textbf{55}, 1-19, 1951

\bibitem {BH}Bernau, S.J and Huijsmans, C.B., "The order bidual of almost
$f$-algebras and $d$-algebras", \textit{Trans. Amer. Math. Soc.}, \textbf{347,
}4259-4275, 1995

\bibitem {BJ}Boulabiar, K. and Jaber, J., "The order bidual of $f$-algebras
revisited", \textit{Positivity}, \textbf{15}, 271-279, 2011

\bibitem {G}Goullet de Rugy, A., "La structure id\'{e}ale des M-espaces",
\textit{J. Math. Pures Appl.}, \textbf{55}, 331-373, 1972

\bibitem {Gr}Grobler, J.J., "Commutativity of the Arens product in lattice
ordered algebras", \textit{Positivity}, \textbf{3}, 357-364, 1999

\bibitem {H}Huijsmans, C.B., "The order bidual of lattice ordered
algebras.II",\textit{ J. Operator Theory}, \textbf{22}, 277-290, 1989

\bibitem {HdP1}Huijsmans, C.B. and de Pagter, B., "The order bidual of lattice
ordered algebras", \textit{J. Func. Anal., }\textbf{59, }41-64, 1984

\bibitem {HdP2}Huijsmans, C.B. and de Pagter, B.,"Subalgebras and Riesz
subspaces of an $f$-algebra",\textit{ Proc. London Math. Soc.}, \textbf{48},
41-64, 1984

\bibitem {J}Jaber, J., "$f$-algebras with a $\sigma$-bounded approximate
unit", \textit{Positivity}, \textbf{18}, 161-170, 2014

\bibitem {O1}Orhon, M.,"The ideal center of the dual of a Banach lattice",
\textit{Positivity}, \textbf{14}, 841-847, 2010

\bibitem {S}Schaefer, H.H., \textit{Banach lattices and positive operators},
Springer, Berlin, 1974

\bibitem {Sd1}Scheffold, E., "Der Bidual von F-Banachverbandsalgebren",
\textit{Acta Sci. Math. (Szeged), }\textbf{55}, 167-179, 1991

\bibitem {Sd2}Scheffold, E., "\H{U}ber symmetrische Operatoren auf
Banachverbanden und Arens-Regularit\"{a}t", \textit{Czechoslovak Math. J.},
\textbf{48}(123), 747-753, 1998

\bibitem {W4}Wickstead, A.W., "The structure space of a Banach lattice",
\textit{J. Math. Pures Appl.}, \textbf{56}, 39-54, 1977

\bibitem {W5}Wickstead, A.W., "Representation and duality of multiplication
operators on archimedean Riesz spaces", \textit{Comp. Math. }, \textbf{35},
225-238, 1977

\bibitem {W1}Wickstead, A.W., "Extremal structures of cones of operators",
\textit{Quart. J. Math. Oxford (2)}, \textbf{32}, 239-253, 1981

\bibitem {W3}Wickstead, A.W., "Banach lattices with trivial centre",
\textit{Proc. R. Ir. Acad.}, \textbf{88A}, 71-83, 1988

\bibitem {W2}Wickstead, A.W., "Banach lattices with topologically full
centre", \textit{Vladikavkaz Mat. Zh.}, \textbf{11}, 50-60, 2009

\bibitem {Z}Zaanen, A.C., \textit{Riesz Spaces II,} North Holland, Amsterdam, 1983
\end{thebibliography}
\end{document}